\newcommand\version{November 23, 2023}
\newtheorem{theorem}{Theorem}
\newtheorem{proposition}[theorem]{Proposition}
\newtheorem{lemma}[theorem]{Lemma}
\newtheorem{corollary}[theorem]{Corollary}
\theoremstyle{definition}
\theoremstyle{remark}
\newtheorem{remark}[theorem]{Remark}
\newtheorem{remarks}[theorem]{Remarks}
\newcommand{\1}{\mathbbm{1}}
\newcommand{\C}{\mathbb{C}}
\renewcommand{\epsilon}{\varepsilon}
\newcommand{\N}{\mathbb{N}}
\renewcommand{\phi}{\varphi}
\newcommand{\R}{\mathbb{R}}
\newcommand{\Sph}{\mathbb{S}}
\DeclareMathOperator{\Div}{div}
\DeclareMathOperator{\re}{Re}
\DeclareMathOperator{\supp}{supp}
\DeclareMathOperator{\HyperF}{F}
\newcommand{\Hypergeom}[5]{{\sideset{_#1}{_#2}\HyperF\!\left(
		\genfrac{}{}{0pt}{0}{#3}{#4}
		\middle|\,#5\right)}}
\newcommand{\rF}{\mathrm{F}}
\newcommand{\HG}[2]{{}_{#1}\rF_{#2}} 
\begin{document}

\title[Minimizers for an aggregation model --- \version]{Minimizers for an aggregation model\\ with attractive--repulsive interaction}

\author{Rupert L. Frank}
\address[Rupert L. Frank]{Mathe\-matisches Institut, Ludwig-Maximilans Universit\"at M\"unchen, The\-resienstr.~39, 80333 M\"unchen, Germany, and Munich Center for Quantum Science and Technology, Schel\-ling\-str.~4, 80799 M\"unchen, Germany, and Mathematics 253-37, Caltech, Pasa\-de\-na, CA 91125, USA}
\email{r.frank@lmu.de}

\author{Ryan W. Matzke}
\address[Ryan W. Matzke]{Department of Mathematics, Vanderbilt University, 1326 Stevenson Center, Station B 407807, Nashville, TN 37240, USA}
\email{ryan.w.matzke@vanderbilt.edu}

\thanks{\copyright\, 2023 by the authors. This paper may be reproduced, in its entirety, for non-commercial purposes.\\
	R.L.F.\ is partially supported through US National Science Foundation grant DMS-1954995, as well as through the Deutsche Forschungsgemeinschaft Excellence Strategy EXC-2111-390814868 and TRR 352 (Project-ID 470903074). R.W.M.\ is supported by US National Science Foundation Postdoctoral Research Fellowship Grant 2202877.}

\begin{abstract}
	We solve explicitly a certain minimization problem for probability measures involving an interaction energy that is repulsive at short distances and attractive at large distances. We complement earlier works by showing that in an optimal part of the remaining parameter regime all minimizers are uniform distributions on a surface of a sphere, thus showing concentration on a lower dimensional set. Our method of proof uses convexity estimates on hypergeometric functions.
\end{abstract}

\maketitle

\section{Introduction and main result}

We are interested in a minimization problem that arises in mathematical biology, mathematical physics, and economics as a toy model for aggregation. It involves a mean-field model of particles, or individuals, interacting through a pair potential, whose resulting force is repulsive for short distances and attractive for long distances. The short range repulsion restricts the collision of particles and the long range attraction prevents particles from spreading out. One then expects the interaction energy to achieve its minimal value at certain steady/stable states of the system, and one interprets this as an example of self-organization and pattern formation. These phenomena and related theory have been the subject of several recent works, many of which we mention below. We would like to determine the explicit equilibrium states of the interaction energy for a particular model, which depends on two parameters related to the nature of the repulsive/attractive forces. The characterization of minimal energy states has previously been achieved for a certain ranges of these parameters. Our contribution in this paper is to enlarge this regime where the states of minimal energy are known. In particular, we will be able to identify critical parameter values, where a phase transition occurs.

Let us now introduce the model and present our results.  We work in spatial dimension $d\geq 1$ and denote by $P(\R^d)$ the set of Borel probability measures on $\R^d$. For parameters $-d<\beta<\alpha<\infty$, we consider the following functional, defined for $\mu\in P(\R^d)$,
$$
\mathcal E_{\alpha,\beta}[\mu] :=
	\frac12 \iint_{\R^d\times\R^d} \left( \alpha^{-1}|x-y|^\alpha - \beta^{-1}|x-y|^\beta \right) d\mu(x)\,d\mu(y) \,.
$$
Here we use the convention to interpret $\gamma^{-1}|x-y|^\gamma$ for $\gamma=0$ as $\ln |x-y|$. The minimal energy is
$$
E_{\alpha,\beta} := \inf\left\{ \mathcal E_{\alpha,\beta}[\mu] :\ \mu\in P(\R^d) \right\}
$$
and we are interested in determining $\mu \in P(\R^d)$ such that
$$
\mathcal{E}_{\alpha, \beta}(\mu) =  E_{\alpha,\beta} \,.
$$

A special role will be played by the parameter value
$$
\beta_*(\alpha) := \frac{-10+3\alpha+7d-\alpha d- d^2}{d+\alpha-3} \,,
$$
defined for $\alpha\geq 2$ and $d\geq 2$. This function is decreasing in $\alpha$ and satisfies for $\alpha \geq 2$, $-d+3 <\beta_*(\alpha)\leq \beta_*(2) = -d+4$. It will be convenient to introduce a number $R_{\alpha,\beta}$ defined, for $\beta_*(\alpha)\leq\beta<\alpha$, by
\begin{equation}
	\label{eq:defr1}
	R_{\alpha,\beta} :=
	\frac12 \left( \frac{\Gamma(\frac{d+\beta-1}{2})\, \Gamma(\frac{2d+\alpha-2}{2})}{\Gamma(\frac{d+\alpha-1}{2})\, \Gamma(\frac{2d+\beta-2}{2})} \right)^\frac{1}{\alpha-\beta}
\end{equation}
and, for $\alpha=2$ and $-d<\beta<-d+4$, by
\begin{equation}
	\label{eq:defr2}
	R_{2,\beta} :=
	\left( \frac{\Gamma(\frac{4-\beta}2)\,\Gamma(\frac{\beta+d}{2})}{\Gamma(1+\frac d2)} \right)^\frac{1}{2-\beta}.
\end{equation}
Also, we will denote by $B_R(a)$ the open ball of radius $R$ centered at $a$.

We now state our main result.

\begin{theorem}\label{main1}
	Let $d\geq2$, $2\leq\alpha\leq 4$ and $\beta_*(\alpha) \leq\beta\leq 2$ with $\beta<\alpha$. Then
	$$
	E_{\alpha,\beta} = 
	\begin{cases}
		- \pi^{-\frac12} \, 2^{d+\alpha-3}\, \frac{\Gamma(\frac d2)\,\Gamma(\frac{d+\alpha-1}{2})}{\Gamma(\frac{2d+\alpha-2}{2})} \left( \frac1\beta - \frac1\alpha \right) R_{\alpha,\beta}^\alpha 
		& \text{if}\ \beta\neq 0 \,,\\
		\frac1{2\alpha}\left( 1 - \ln \frac{\Gamma(\frac{d-1}2)\,\Gamma(\frac{2d+\alpha-2}{2})}{\Gamma(d-1)\,\Gamma(\frac{d+\alpha-1}{2})} \right) + \frac14 \left( \frac{\Gamma'(d-1)}{\Gamma(d-1)} - \frac{\Gamma'(\frac{d-1}{2})}{\Gamma(\frac{d-1}{2})} \right) & \text{if}\ \beta =0 \,.
	\end{cases}
	$$
	The infimum is attained if for some $a\in\R^d$,
	$$
	\mu = (|\Sph^{d-1}|R_{\alpha,\beta}^{d-1})^{-1}\ \delta_{\partial B_{R_{\alpha,\beta}}(a)} \, ,
	$$
	Provided $(\alpha,\beta)\neq (4,2)$, these are the only minimizers of $\mathcal{E}_{\alpha, \beta}$.
\end{theorem}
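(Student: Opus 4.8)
The plan is to derive uniqueness from a short variational (linear--programming) argument built on two ingredients: the Euler--Lagrange (Frostman-type) inequality for the explicit spherical minimizer, and the strict positivity of the interaction quadratic form on signed measures of zero mass and zero first moment. Write $\mathcal E_{\alpha,\beta}[\mu]=\tfrac12 B(\mu,\mu)$, where $B(\mu_1,\mu_2):=\iint (\alpha^{-1}|x-y|^\alpha-\beta^{-1}|x-y|^\beta)\,d\mu_1(x)\,d\mu_2(y)$, let $\nu:=(|\Sph^{d-1}|R_{\alpha,\beta}^{d-1})^{-1}\,\delta_{\partial B_{R_{\alpha,\beta}}(a)}$ be the minimizer supplied by the theorem, and let $U^\nu:=B(\nu,\cdot)$ be its potential; since $\beta>-d+3>-(d-1)$, $U^\nu$ is finite and continuous on $\R^d$. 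As $\nu$ minimizes the quadratic functional $\mathcal E_{\alpha,\beta}$ over $P(\R^d)$, the first-order minimality condition reads $\int U^\nu\,d\rho\ge 2E_{\alpha,\beta}$ for every $\rho\in P(\R^d)$ of finite energy; taking $\rho$ to be normalized indicators of small balls and using continuity of $U^\nu$ gives the Frostman bound $U^\nu\ge 2E_{\alpha,\beta}$ everywhere, while $\int U^\nu\,d\nu=2\mathcal E_{\alpha,\beta}[\nu]=2E_{\alpha,\beta}$.

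Now let $\mu$ be an arbitrary minimizer. Finiteness of its energy forces a finite $\alpha$-th, hence (as $\alpha\ge2$) finite first and second, moment; I would also use the standard fact that minimizers of such mildly repulsive energies are compactly supported, so that $\mu$ and the signed measure $\sigma:=\mu-\nu$ are compactly supported with moments of every order. By translation invariance assume $\int x\,d\mu=a$ (the center of $\nu$), so that $\sigma(\R^d)=0$ and $\int x\,d\sigma=0$. Expanding $0=\mathcal E_{\alpha,\beta}[\mu]-\mathcal E_{\alpha,\beta}[\nu]=B(\nu,\sigma)+\tfrac12 B(\sigma,\sigma)$ and using $B(\nu,\sigma)=\int U^\nu\,d\mu-2E_{\alpha,\beta}\ge0$ (Frostman bound, $\mu\in P(\R^d)$) yields $B(\sigma,\sigma)\le 0$. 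So it suffices to prove that $B(\sigma,\sigma)>0$ for every nonzero compactly supported signed measure $\sigma$ with $\sigma(\R^d)=0$ and $\int x\,d\sigma=0$, whenever $(\alpha,\beta)\ne(4,2)$; together with $B(\sigma,\sigma)\le0$ this forces $\sigma=0$, i.e.\ $\mu=\nu$, which is the claim.

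To prove this positivity, split $B(\sigma,\sigma)=\alpha^{-1}I_\alpha[\sigma]-\beta^{-1}I_\beta[\sigma]$ with $I_\gamma[\sigma]:=\iint|x-y|^\gamma\,d\sigma(x)\,d\sigma(y)$, and use the Fourier representation $I_\gamma[\sigma]=c_{d,\gamma}\int_{\R^d}|\xi|^{-d-\gamma}|\widehat\sigma(\xi)|^2\,d\xi$ (for $\gamma$ not a non-negative even integer, with $\ln|x-y|$ giving the analogue with exponent $-d$), where $\operatorname{sgn}c_{d,\gamma}=\operatorname{sgn}\Gamma(-\gamma/2)$ and the integral converges at the origin because $\widehat\sigma(\xi)=O(|\xi|^2)$ (this uses the vanishing zeroth and first moments and is exactly what forces us to stay below the threshold $\gamma=4$). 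Consequently, for $-d<\gamma<2$ (including $\gamma=0$) one has $-\gamma^{-1}I_\gamma[\sigma]>0$ unless $\sigma=0$, and for $2<\gamma<4$ one has $\gamma^{-1}I_\gamma[\sigma]>0$ unless $\sigma=0$ (here $c_{d,\gamma}>0$ since $\Gamma$ is positive on $(-2,-1)$). The even exponents that can occur are handled directly under the moment constraints: $I_2[\sigma]=-2\bigl|\int x\,d\sigma\bigr|^2=0$, while expanding $|x-y|^4=(|x|^2-2x\cdot y+|y|^2)^2$ and using the moment conditions gives $I_4[\sigma]=4\|M\|_{\mathrm{HS}}^2+2(\operatorname{tr}M)^2\ge0$ with $M:=\int x\otimes x\,d\sigma$, with equality iff $M=0$. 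Assembling: if $\beta<2$, the $\beta$-term of $B(\sigma,\sigma)$ is strictly positive unless $\sigma=0$ while the $\alpha$-term is $\ge0$ (note $\alpha^{-1}I_2[\sigma]=0$ when $\alpha=2$, and $\alpha^{-1}I_\alpha[\sigma]\ge0$ for $2<\alpha\le4$); if $\beta=2$, then $\beta<\alpha\le4$ and $(\alpha,\beta)\ne(4,2)$ force $2<\alpha<4$, the $\beta$-term vanishes, and the $\alpha$-term is strictly positive unless $\sigma=0$. Either way $B(\sigma,\sigma)>0$ unless $\sigma=0$, as needed. (At $(\alpha,\beta)=(4,2)$ this collapses to $B(\sigma,\sigma)=\|M\|_{\mathrm{HS}}^2+\tfrac12(\operatorname{tr}M)^2$, which vanishes whenever $\sigma$ has vanishing moments through order two, so strict positivity --- and hence uniqueness --- genuinely fails, consistent with the exclusion.)

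The main obstacle is the sharp positivity statement in the third paragraph. One must track the sign of the multiplier $c_{d,\gamma}$ across the thresholds $\gamma=2$ and $\gamma=4$; treat $\gamma=4$ separately, where the homogeneous Fourier formula for $|x|^\gamma$ breaks down ($|x|^4$ being a polynomial) and only conditional positive \emph{semi}definiteness survives, so that the $\beta$-term must supply strictness --- this is precisely what isolates $(4,2)$; and verify that the decay $\widehat\sigma(\xi)=O(|\xi|^2)$, equivalently vanishing of the first moment of $\sigma$ (for which one needs a priori moment information on minimizers), is exactly enough for $\alpha<4$ and exactly too weak at $\alpha=4$. Making the pairings $I_\gamma[\sigma]=c_{d,\gamma}\int|\xi|^{-d-\gamma}|\widehat\sigma|^2$ rigorous --- by mollification using compact support of $\sigma$, and invoking finiteness of the relevant energies to control the integrand at infinity --- is routine but needs care.
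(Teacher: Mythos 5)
There is a fundamental gap: your argument proves only the uniqueness assertion, and it does so \emph{conditionally on the very facts the theorem asserts}. You begin by letting $\nu$ be ``the minimizer supplied by the theorem'' and derive the Frostman bound $U^\nu\ge 2E_{\alpha,\beta}$ from the assumed minimality of $\nu$. But the main content of Theorem \ref{main1} --- and the entire technical core of the paper --- is precisely to show that the uniform measure on the sphere of radius $R_{\alpha,\beta}$ \emph{is} a minimizer and that $E_{\alpha,\beta}$ has the stated value. In the paper this is done by computing the potential of the shell measure in terms of hypergeometric functions (Lemma \ref{pothyper2}), choosing $R=R_{\alpha,\beta}$ so that the radial derivative of the potential vanishes on the shell, and then proving the global inequality $\phi_{\alpha,\beta}\ge\eta$ on all of $\R^d$ by establishing convexity of the function $\Psi$ in \eqref{eq:goalconvexapp} (Proposition \ref{psiconvex}, which rests on Lemmas \ref{hyperpos}, \ref{hypercor} and \ref{lem:Hypergeom intersect once}, and which is exactly where the sharp threshold $\beta\ge\beta_*(\alpha)$ enters). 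None of this appears in your proposal; you cannot obtain the Frostman bound for $U^\nu$ from minimality of $\nu$ without first proving that minimality, and the energy value is likewise never computed. As written, the argument is circular with respect to the statement being proved.

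The uniqueness portion you do supply is essentially the paper's Lemma \ref{riesz}: after reducing to a signed measure $\sigma$ with $\sigma(\R^d)=0$ and $\int x\,d\sigma=0$, one uses the sign of the Fourier multiplier of $|x|^\gamma$ (positive for $-d<\gamma<2$ after a sign flip by $-\gamma^{-1}$, positive again for $2<\gamma<4$, and only conditionally semidefinite at $\gamma=4$, which is what isolates $(\alpha,\beta)=(4,2)$). Your computation of $I_4[\sigma]=4\|M\|_{\mathrm{HS}}^2+2(\operatorname{tr}M)^2$ and the case analysis are correct in outline, and the technical caveats you flag (compact support or sufficient moments of an arbitrary minimizer, mollification to justify the Plancherel identity) are the right ones. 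So this part is a reasonable reconstruction of Lemma \ref{riesz}, but it is the part the paper deliberately only sketches; the part it proves in detail --- the hypergeometric convexity needed to verify the Euler--Lagrange inequality for the shell measure --- is missing from your proposal.
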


This result is sharp in the sense that, for $2\leq\alpha\leq 4$, if $\beta<\alpha$ does \emph{not} belong to the interval $[\beta_*(\alpha),2]$, then the uniform distribution on the surface of a sphere is \emph{not} a minimizer; see item (d) in the remarks after the next theorem. Meanwhile, it is conceivable that a similar result also holds outside the range $2\leq\alpha\leq 4$ for suitable values of $\beta$. 

We believe that the result of Theorem \ref{main1} is new, except in the particular cases $\beta=2$ and $(\alpha,\beta)=(2,-d+4)$, for which we provide references after the next theorem. We refer to \cite{DaLiMC2} for a classification of minimizers in the case $(\alpha,\beta)=(4,2)$ that we have excluded.

We use the method of proof of Theorem \ref{main} to give an alternative proof of the following known result concerning $\alpha=2$ and $\beta<\beta_*(2)=-d+4$. (The latter equality can be viewed as the definition of $\beta_*(2)$ for $d=1$.)

\begin{theorem}\label{main}
	Let $d\geq 1$, $\alpha =2$, and  $-d<\beta < \min\{2, -d+4\}$.
	Then
	$$
	E_{2,\beta} = 
	\begin{cases}
		- \frac{d (2-\beta)}{2\beta(4-\beta)} \, R_{2,\beta}^2 & \text{if}\ \beta\neq 0 \,,\\
		\frac14\left( \frac12 + \ln\frac d2 + \frac{\Gamma'(2)}{\Gamma(2)} - \frac{\Gamma'(\frac d2)}{\Gamma(\frac d2)} \right) & \text{if} \ \beta = 0 \,.
	\end{cases}
	$$
	The infimum is attained if and only if, for some $a\in\R^d$,
	$$
	d\mu(x) = C_\beta^{-1} \ R_{2,\beta}^{\beta-2} \ (R_{2,\beta}^2 - |x-a|^2)^{\frac{2-\beta-d}2} \ \1_{B_{R_{2,\beta}}(a)}(x)\,dx
	$$
	with
	\begin{equation}
		\label{eq:defca}
		C_\beta := \pi^\frac d2\, \frac{\Gamma(\frac{4-\beta-d}{2})}{\Gamma(\frac{4-\beta}{2})} \,.
	\end{equation}
\end{theorem}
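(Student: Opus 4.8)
By translation invariance it is enough to take $a=0$ and to show that, among probability measures of barycenter $0$ (any measure of finite energy has finite second moment, hence a well-defined barycenter), the unique minimizer is
$$
d\mu_0(x):=C_\beta^{-1}\,R^{\beta-2}\,(R^2-|x|^2)^{\frac{2-\beta-d}{2}}\,\1_{B_R(0)}(x)\,dx,\qquad R:=R_{2,\beta};
$$
the general statement, and the assertion that there are no other minimizers, then follow by translating. For barycenter-$0$ measures one has $\frac12\iint|x-y|^2\,d\mu\,d\mu=\int|x|^2\,d\mu$, so
$$
\mathcal E_{2,\beta}[\mu]=\tfrac12\int|x|^2\,d\mu(x)-\tfrac1{2\beta}\iint|x-y|^\beta\,d\mu(x)\,d\mu(y)
$$
(with $\ln|x-y|$ replacing $-\beta^{-1}|x-y|^\beta$ if $\beta=0$). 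The first term is affine in $\mu$, and $\nu\mapsto-\beta^{-1}\iint|x-y|^\beta\,d\nu\,d\nu$ is strictly positive on nonzero signed measures $\nu$ of finite energy with $\nu(\R^d)=0$ — the classical conditional positive definiteness of order one of $-|x-y|^\beta$ for $-d<\beta<2$, read off from $|x|^\beta=c\int_{\R^d}(1-\cos(x\cdot\xi))|\xi|^{-d-\beta}\,d\xi$ when $0<\beta<2$ and $|x|^\beta=c\int_{\R^d}e^{ix\cdot\xi}|\xi|^{-d-\beta}\,d\xi$ when $-d<\beta<0$, with $c>0$ in both cases (and the corresponding representation of $-\ln|x|$). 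Hence $\mathcal E_{2,\beta}$ is strictly convex on this class, so a minimizer there is unique and is characterized by the Euler--Lagrange conditions: there is $c\in\R$ with
$$
V^{\mu}(x):=\tfrac12|x|^2-\tfrac1\beta\int|x-y|^\beta\,d\mu(y)\ \ge\ c\ \ \text{q.e. on }\R^d,\qquad V^{\mu}=c\ \ \mu\text{-a.e.}
$$

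The core of the argument is to verify these for $\mu_0$. For $|x|<R$ I would compute the potential of $\mu_0$ explicitly: rescaling by $R$ reduces it to $\Phi(|\xi|):=\int_{|z|<1}|\xi-z|^\beta(1-|z|^2)^{\frac{2-\beta-d}{2}}\,dz$, and a Funk--Hecke/Gegenbauer expansion — equivalently an Euler-type radial integral of the classical formula for $\int_{\Sph^{d-1}}|x-y|^\beta\,d\sigma(y)$ — represents $\Phi$, on $|\xi|<1$, as a Gauss hypergeometric function of $|\xi|^2$. The exponent $\tfrac{2-\beta-d}{2}$ — equivalently the condition $\beta<-d+4$ making it exceed $-1$ — is precisely the value for which this ${}_2F_1$ truncates after the linear term, so $\Phi(|\xi|)=a+b|\xi|^2$ on $|\xi|<1$ with $a,b$ explicit ratios of Gamma functions. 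Undoing the scaling, $\int|x-y|^\beta\,d\mu_0(y)=C_\beta^{-1}R^{\beta}a+C_\beta^{-1}R^{\beta-2}b\,|x|^2$ on $B_R$; since the normalization $C_\beta^{-1}R^{\beta-2}$ already makes $\mu_0$ a probability measure for every $R>0$, the value $R=R_{2,\beta}$ is singled out as the one for which the coefficient of $|x|^2$ equals $\beta/2$ — which is exactly the Euler--Lagrange equation on $B_R$, and which reproduces \eqref{eq:defr2}. Reading off $c$ and then computing $E_{2,\beta}=\mathcal E_{2,\beta}[\mu_0]=\tfrac14\int|x|^2\,d\mu_0-\tfrac{a}{2\beta}C_\beta^{-1}R_{2,\beta}^{\beta}$ via the elementary Beta-integral $\int|x|^2\,d\mu_0$ gives the stated value (the $\beta=0$ case following by letting $\beta\to0$, or by the analogous computation with $-\ln$).

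It remains to prove $V^{\mu_0}\ge c$ for $|x|>R$. Here I would write, for $r=|x|>R$, the potential $\int_{B_R}|x-y|^\beta(R^2-|y|^2)^{\frac{2-\beta-d}{2}}\,dy$ as $r^{\beta}$ times a Gauss hypergeometric function of $R^2/r^2$ (again through the sphere formula and an Euler integral); using continuity — indeed $C^1$-matching — of the potential across $\partial B_R$, where $V^{\mu_0}\equiv c$ has vanishing gradient, the inequality reduces to a convexity/monotonicity estimate for this hypergeometric function on $(0,1)$, of exactly the type established in the proof of Theorem~\ref{main1}. This off-support bound, together with identifying the relevant ${}_2F_1$, is the main obstacle; once that hypergeometric input — shared with Theorem~\ref{main1} — is in hand, strict convexity makes the rest routine: $\mu_0$ is the unique barycenter-$0$ minimizer, and since $\mathcal E_{2,\beta}[\mu_0]<\infty$ this proves attainment, translation invariance yielding the full statement of the theorem.
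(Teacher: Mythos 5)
Your strategy is essentially the paper's own proof: sufficiency and uniqueness of the Euler--Lagrange conditions via (conditional) positive definiteness (Lemma~\ref{riesz}, with your barycenter identity playing the role of the Lopes center-of-mass argument), the truncating hypergeometric representation of the $\beta$-potential inside the ball (Lemma~\ref{pothyper1}, where the second parameter $-1$ makes $F$ affine in $|x|^2$), the same choice of $R_{2,\beta}$ to cancel the $|x|^2$-coefficient, and the off-support bound from convexity/concavity of $z\mapsto z^{-a}F(a,b;c;z^{-1})$. The one input you defer --- the hypergeometric estimate for $|x|>R$ --- is exactly what the paper supplies in Corollary~\ref{hypercor} (via Lemma~\ref{hyperpos}), so the approach is the same and contains no gap.
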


\begin{remarks}\label{mainrem} 
	Here we collect some notes as well as results from previous works.\\
	(a) As mentioned earlier, Theorem \ref{main} (where $\alpha=2$) is a known result. The result in the case $\beta=-d+2$ of Coulomb repulsion is folklore. The result for $-d<\beta<-d+2$ can be extracted from the analysis of the porous medium equation with fractional diffusion by Caffarelli and V\`azquez \cite{CaVa}; for the case $d=1$, see also \cite{Fr1}. The result for $-d+2<\beta<\min\{-d+4,2\}$ is due to Carrillo and Shu \cite[Theorem 5.1]{CaSh1}.\\
	(b) The particular case $(\alpha,\beta)=(2,-d+4)$ of Theorem \ref{main1} has been addressed before. Specifically, in \cite[Remark 5.8]{CaSh1} Carrillo and Shu sketch how their method of proof dealing with the regime $-d+2<\beta<\min\{-d+4,2\}$, $\alpha = 2$, allows them to handle the limiting case $\beta=-d+4$ for $d>2$. As far as we understand, this method does not extend to $\beta>-d+4$. In particular, we point out that the statement below \cite[(2.4)]{DaLiMC1} to the effect that the case $-d+4<\beta<2$ is solved in \cite{CaSh1} is incorrect, as confirmed by the authors of \cite{DaLiMC1} in personal communication.\\
	(c) The limiting case $\beta=2$ of Theorem \ref{main1} has been previously addressed. Indeed, the minimization problem $E_{\alpha,2}$ with $\beta=2$ was completely solved in two papers \cite{DaLiMC1,DaLiMC2} by Davies, Lim, and McCann, except for the case $2<\alpha<3$ in $d=1$ treated in \cite{Fr1}. Besides the result stated in Theorem \ref{main1} in this case, it is shown in these papers that for $\alpha>4$ in $d\geq 2$, minimizers are uniform distributions on the vertices of a regular simplex, while for $\alpha=4$ in $d\geq2$ there is a large family of minimizers. The papers \cite{DaLiMC1,DaLiMC2} also contain partial results for $\beta\neq 2$.\\
	(d) For any $2\leq\alpha\leq 4$ the assumption $\beta_*(\alpha)\leq\beta\leq 2$ on the parameter $\beta$ satisfying $\beta<\alpha$ is optimal for the minimizer being the uniform distribution on the surface of a sphere. Indeed, if $\beta>2$, then minimizing measures are supported on finitely many points \cite{CaFiPa} and, if $\beta<\beta_*(\alpha)$, then the uniform distribution on the surface of a sphere is not even a local minimizer \cite{BaCaLaRa}. For the latter point we also refer to Step 1 in the proof of Lemma \ref{convexsmall} below.\\
	(e) For $\alpha=2$ (where $\beta_*(\alpha)=-d+4$), as the parameter $\beta$ increases, one sees a transition from probability measures that are absolutely continuous with respect to Lebesgue measure with bounded density (for $-d < \beta\leq-d+2$) to such with unbounded density (for $-d+2<\beta<-d+4$) and further, if $d\geq 3$, to a singular measure, namely uniform measure on the surface of a sphere (for $-d+4\leq\beta<2$). Similar transitions are expected for any fixed $2\leq\alpha\leq 4$. One may wonder whether the transition points are always $-d+2$ and $\beta_*(\alpha)$. For partial results for $\alpha=4$ see \cite[Theorem 5.1]{CaSh1}, and for numerical results for $\alpha=4$ in dimension $d=1$ see \cite{GuCaOl}. One can observe a similar transition to more and more singular measures if one fixes $\beta=2$ and lets $\alpha$ increase; see item (c).\\
	(f) A very brief comment on the proof: using the positive definiteness of the kernel $|x-y|^{-\gamma}$ for $\gamma\leq 0$ (or conditional negative definiteness for $\gamma\leq 2$), as well as a certain conditional positive definiteness for $2\leq\gamma\leq 4$, we see that it is enough to find a solution of the Euler--Lagrange relations. These state that the potential of the relevant measure is constant on the support of the measure and attains its minimum there. Verifying that the potential is constant on the support of the measure is relatively easy in our situation. The difficulty is to show that outside the support the potential is at least as large as this constant. Our starting point to solve this is to express the potential as a hypergeometric function. This reduces matters to proving a convexity inequality for hypergeometric functions. In the one-dimensional case in \cite{Fr0} this was a relatively straightforward consequence of an integral representation formula for hypergeometric functions. In the present situation, however, we are outside the range of validity of this integral representation. We will show that one can obtain the desired inequality by repeated differentiation. While hypergeometric functions are a classical topic, we have not been able to find this result in the literature. We hope that our result will be relevant in other situations as well.\\
	(g) A related problem that has been receiving some recent study is that of minimizating the Riesz interaction energy with in the presence of an external field, leading to the energy functional
	\begin{equation}\label{eq:Riesz with External Field}
		\int_{\R^d} \alpha^{-1}|x|^\alpha \,d\mu(x) - \frac12 \iint_{\R^d\times\R^d} \beta^{-1} |x-y|^\beta\,d\mu(x)\,d\mu(y) \,.
	\end{equation}
	This has be studied in, for instance \cite{Bi, ChSaWo1,  ChSaWo2, HeGrBeSt, Lop, MhSa}.\\
	(h) The case where $\alpha \rightarrow \infty$ gives us the kernel
	\begin{equation}
	W_{\infty, \beta}(x-y) = \begin{cases}
	- \frac{|x-y|^{\beta}}{\beta} & \text{if}\ |x-y| \leq 1 \,, \\
	\infty & \text{if}\ |x-y| > 1 \,.
	\end{cases}
	\end{equation}
	This situation is known as hard confinement and involves a constraint on the diameter of the support of the probability measures. This topic has been studied in \cite{BrChHC, LiMC}.
\end{remarks}

Let us now put the model we are studying into context. From a very wide perspective, it is a mean-field model for the distribution of particles/individuals that interact via forces that have both attractive and repulsive components, and one would like to find a minimizing probability measure for the corresponding energy functional. Some models of this type are reviewed in \cite{Fr0}. More specifically, in the present situation the only term in the energy functional is a pair interaction, described by a certain potential $W$, and consequently the energy is of the form
$$
\frac12 \iint_{\R^d\times\R^d} W(x-y) \,d\mu(x)\,d\mu(y) \,.
$$
We refer, for instance, to the introduction of \cite{BaCaLaRa} for a large number of references of where such models appear in biology and physics. Often, there seems to be no canonical choice for the interaction potential $W$ and one direction of investigation is to understand how sensitive the qualitative behavior of minimizers is to changes in $W$. Another line of research is to understand this behavior for some toy potentials. A frequently used choice is the family $W_{\alpha,\beta}(z) = \alpha^{-1} |z|^\alpha - \beta |z|^\beta$ for $\alpha>\beta$, where one would like to understand a `phase diagram' in the $(\alpha,\beta)$-plane. This is the model studied in this paper.

Qualitative and quantitative properties of minimizers that one would like to understand are its absolute continuity with respect to Lebesgue measure \cite{CaDeMe} or the dimensionality of the support \cite{BaCaLaRa1,CaFiPa,CaSh1,DaLiMC1,DaLiMC2}. Despite some significant progress in these works, the dimensionality of minimizers seems to be still unknown in a large part of the $(\alpha,\beta)$-plane. We also emphasize that numerical experiments \cite[Table 4]{BaCaLaRa1} suggest that the dimensionality is unstable under perturbations and can decrease. The same may occur for nonradial interaction kernels $W$, as was recently explored in several works; see, e.g., \cite{MoRoSc,CaMaMoRoScVe,CaMaMoRoScVe2,CaSh2,CaSh3,MaMoRoScVe}.

We should also mention that the corresponding time-dependent equation
$$
\frac{\partial\mu}{\partial t} = \Div(\mu(\nabla W*\mu)) \,,
$$
which is formally the Wasserstein-2 gradient flow of the energy functional, has been intensely studied. For instance, in \cite{BaCaLaRa} the measures appearing in Theorem \ref{main1} were identified as steady states of the this time-dependent problem and their stability was investigated. Similarly, the functions in Theorem \ref{main} were identified as steady states in \cite{CaHu}. We recall that being a steady state means, essentially, that the potential is constant on the support of the measure. For being a global minimizer it is necessary (and, as we will see in Lemma~\ref{riesz}, also sufficient) that, apart from this constancy, the potential as at least as big as this constant outside of the support of the measure. It is this latter condition that is responsible for the technical work in this paper.

\subsection*{Acknowledgements}

The authors are grateful to D.~Bilyk, J.~A.~Carrillo, D.~Chafa\"{i}, C.~Davies, T.~Lim, J.~Mateu, R.~McCann, E.~B.~Saff, J.~Verdera, M.~Vu, and R.~Womersley for several discussions, correspondences, and help with references during the process of working on this problem.


\section{Background: Some facts about hypergeometric functions}\label{Sec:Hypergeom facts}

In the proof of our main theorems, the potentials
$$
\int_{\R^d} \left( \alpha^{-1} |x-y|^\alpha - \beta^{-1} |x-y|^\beta \right)d\mu(y) \,,
$$
of the candidate minimizers $\mu$ will play an important role. To deduce properties of these potentials, we will express them in terms of hypergeometric functions and then prove and apply results for those.

This section is meant to recall the definition and properties of hypergeometric functions and prove some that we have not been able to find in the literature. They will be crucial in the proof of our main theorems.

We restrict ourselves to parameters $a,b\in\R$ and $c\in\R\setminus(-\N_0)$ and consider the hypergeometric series \cite[(9.100)]{GrRy}
$$
F\left( a, b; c; z \right) = \sum_{n=0}^\infty \frac{a(a+1)\cdots(a+n-1)}{c(c+1)\cdots(c+n-1)}\,\frac{b(b+1)\cdots(b+n-1)}{1\cdot 2\cdots n}\, z^n \,.
$$
Here $\lambda(\lambda+1)\cdots(\lambda+n-1)$ is interpreted as $1$ if $n=0$. By standard facts about power series, this converges absolutely and uniformly on compact subsets of $\{z\in\C:\ |z|<1\}$ and defines an analytic function in the open unit disc. We will only be interested in values $z\in[0,1)$.

We first summarize some facts about its boundary behavior as $z\to 1^-$.

\begin{lemma}\label{hypercont}
	Let $a,b\in\R$, $c\in\R\setminus(-\N_0)$ with $c-a-b>0$.
	\begin{itemize}
		\item[(a)] The functions $z\mapsto F(a,b;c;z)$ on $[0,1)$ and $ z\mapsto z^{-a} F(a,b;c;z^{-1})$ on $(1, \infty)$ extend continuously to $z=1$ with
		\begin{equation}\label{eq:hyperone}
			\HG21\left( a, b; c; 1 \right)= \frac{\Gamma(c)\,\Gamma(c-a-b)}{\Gamma(c-a)\,\Gamma(c-b)} \,.
		\end{equation}
		\item[(b)] If $c-a-b>1$, then their derivatives extend continuously to $z=1$ with
		\begin{equation}\label{eq:hyperoneder}
			\frac{d}{dz}\Big|_{z=1}F\left( a, b; c; z \right) = ab\ \frac{\Gamma(c)\,\Gamma(c-a-b-1)}{\Gamma(c-a)\,\Gamma(c-b)}
		\end{equation}
		and
		\begin{equation}\label{eq:hyperonedermod}
			\frac{d}{dz}\Big|_{z=1}\left( z^{-a} F\left( a, b; c; z^{-1} \right)\right) = -a\
			\frac{\Gamma(c)\,\Gamma(c-a-b-1)}{\Gamma(c-a-1)\,\Gamma(c-b)} \,.
		\end{equation}
		\item[(c)] If $c-a-b>2$, then their second derivatives extend continuously to $z=1$ with
		\begin{equation}\label{eq:hyperoneder2}
			\frac{d^2}{dz^2}\Big|_{z=1} F\left( a, b; c; z \right) = a(a+1)b(b+1)\ \frac{\Gamma(c)\,\Gamma(c-a-b-2)}{\Gamma(c-a)\,\Gamma(c-b)}
		\end{equation}
		and
		\begin{equation}\label{eq:hyperoneder2mod}
			\frac{d^2}{dz^2}\Big|_{z=1}\left( z^{-a} F\left( a, b; c; z^{-1} \right)\right) = a(a+1)\
			\frac{\Gamma(c)\,\Gamma(c-a-b-2)}{\Gamma(c-a-2)\,\Gamma(c-b)} \,.
		\end{equation}		
	\end{itemize}
\end{lemma}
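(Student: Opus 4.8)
The plan is to evaluate the three limits by recognizing that all of them reduce, after differentiation, to the single identity \eqref{eq:hyperone} applied to shifted parameters. First I would establish part (a). For the function on $[0,1)$: differentiating the hypergeometric series term by term shows that $\frac{d}{dz}F(a,b;c;z) = \frac{ab}{c}\,F(a+1,b+1;c+1;z)$, and more generally each derivative is again a hypergeometric function with shifted parameters, so there is no genuinely new content beyond \eqref{eq:hyperone}. The main point of (a) is rather the \emph{continuity up to the boundary} together with the explicit Gauss value. Continuity from the left at $z=1$ under the hypothesis $c-a-b>0$ is classical (e.g.\ \cite[(9.102)]{GrRy} or Abel's theorem combined with the fact that the coefficients, which behave like $n^{a+b-c-1}/\Gamma(a)\Gamma(b)\cdot\Gamma(c)$, are summable precisely when $c-a-b>0$), and the value is Gauss's summation formula \eqref{eq:hyperone}. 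For the function $z\mapsto z^{-a}F(a,b;c;z^{-1})$ on $(1,\infty)$, I would substitute $w=z^{-1}\in(0,1)$, so the claim becomes that $w^{a}F(a,b;c;w)\to \HG21(a,b;c;1)$ as $w\to 1^-$, which is immediate from the first part since $w^a\to 1$. So (a) is really just Gauss plus a boundary-continuity statement.

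For (b), the hypothesis is strengthened to $c-a-b>1$, which is exactly what is needed for $F(a+1,b+1;c+1;z)$ to fall under case (a) (since $(c+1)-(a+1)-(b+1) = c-a-b-1>0$). Thus
$$
\frac{d}{dz}\Big|_{z=1}F(a,b;c;z) = \frac{ab}{c}\,\HG21(a+1,b+1;c+1;1) = \frac{ab}{c}\cdot\frac{\Gamma(c+1)\,\Gamma(c-a-b-1)}{\Gamma(c-a)\,\Gamma(c-b)},
$$
and using $\Gamma(c+1)=c\,\Gamma(c)$ gives \eqref{eq:hyperoneder}. One subtlety is justifying that the termwise-differentiated series actually represents the derivative up to and including the boundary point; this follows because $\frac{d}{dz}F(a,b;c;z)=\frac{ab}{c}F(a+1,b+1;c+1;z)$ holds on $[0,1)$ by uniform convergence on compacta, and then the left-hand side extends continuously to $z=1$ precisely because the right-hand side does by (a). For \eqref{eq:hyperonedermod} I would again write $g(z)=z^{-a}F(a,b;c;z^{-1})$ and compute $g'(z) = -a z^{-a-1}F(a,b;c;z^{-1}) + z^{-a}\cdot(-z^{-2})\frac{ab}{c}F(a+1,b+1;c+1;z^{-1})$; letting $z\to 1^+$ and using \eqref{eq:hyperone} for both hypergeometric values, the two terms combine. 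The combination should simplify using the contiguous relation (or a direct computation with Gamma functions) $\HG21(a,b;c;1) - \frac{ab}{c(c-a-b-1)}\HG21(a+1,b+1;c+1;1)$ reducing to $\frac{\Gamma(c)\Gamma(c-a-b-1)}{\Gamma(c-a-1)\Gamma(c-b)}$ up to the factor $-a$; I would verify this by clearing Gamma functions with the recursion $\Gamma(x+1)=x\Gamma(x)$.

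For (c), with $c-a-b>2$, I would iterate: $\frac{d^2}{dz^2}F(a,b;c;z) = \frac{a(a+1)b(b+1)}{c(c+1)}F(a+2,b+2;c+2;z)$, which falls under case (a) since $(c+2)-(a+2)-(b+2)=c-a-b-2>0$, and \eqref{eq:hyperoneder2} follows from \eqref{eq:hyperone} together with $\Gamma(c+2)=c(c+1)\Gamma(c)$. For \eqref{eq:hyperoneder2mod} I would differentiate $g(z)=z^{-a}F(a,b;c;z^{-1})$ twice, producing three terms (from the two factors $z^{-a}$ and $F$, by the product and chain rules), evaluate each at $z=1$ using \eqref{eq:hyperone} for the appropriate shifted parameters, and then collapse the resulting combination of three Gamma-function expressions into the single stated one using the recursion relation repeatedly. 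I expect the \textbf{main obstacle} to be purely bookkeeping: the boundary evaluations of $g'$ and $g''$ each involve a sum of several Gauss values with different parameter shifts, and verifying that these telescope to the clean right-hand sides requires careful manipulation of Gamma-function ratios (essentially instances of the contiguous relations for $\HG21$ at $z=1$). There is no conceptual difficulty once the termwise-differentiation-plus-continuity mechanism of parts (a)--(b) is in place; the only thing to watch is that each differentiation lowers the effective value of $c-a-b$ by exactly one, so the hypotheses $c-a-b>1$ and $c-a-b>2$ are precisely what keep us inside the regime of validity of \eqref{eq:hyperone} at each stage.
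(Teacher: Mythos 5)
Your proposal is correct and follows the same core mechanism as the paper: termwise differentiation produces hypergeometric functions with shifted parameters, each differentiation lowers $c-a-b$ by one, and the hypotheses in (b) and (c) are exactly what keeps the shifted series inside the regime of Gauss's evaluation \eqref{eq:hyperone}. The one place where you diverge is the modified function $z^{-a}F(a,b;c;z^{-1})$: you differentiate by the product rule and then telescope several Gauss values via Gamma-function recursions, whereas the paper invokes the contiguous derivative identity $\frac{d}{dz}\bigl(z^{-a}F(a,b;c;z^{-1})\bigr)=-a\,z^{-a-1}F(a+1,b;c;z^{-1})$ (from \cite[(15.2.3)]{AbSt}), which replaces your two-term (and, in part (c), three-term) combination by a single shifted hypergeometric function; the second derivative then follows by iterating the same identity with no bookkeeping at all. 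Your route does close: at $z=1$ the two terms of $g'$ combine using $\Gamma(c-a-b)=(c-a-b-1)\Gamma(c-a-b-1)$ and $(c-a-1)\Gamma(c-a-1)=\Gamma(c-a)$ to give \eqref{eq:hyperonedermod}, and the analogous (longer) computation yields \eqref{eq:hyperoneder2mod}. So the content is the same; the paper's choice of derivative identity just makes the parameter bookkeeping trivial, which is worth knowing for part (c).
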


\begin{proof}
	It is well known (see, e.g., \cite[(9.102)]{GrRy}) that the hypergeometric series converges on the boundary of the unit disc if $c-a-b>0$. Its value at 1, given in \eqref{eq:hyperone}, can be found in \cite[(9.122.1)]{GrRy}. This proves part (a).
	
	Directly from the definition of the hypergeometric series we find that
	\begin{equation}
		\label{eq:hyperder}
		\frac d{dz}F\left( a, b; c; z \right) = \frac{ab}{c}\, F\left( a+1, b+1; c+1; z \right)\,.
	\end{equation}
	Therefore the facts about the derivative of the first function follow from the same facts as for the function itself. For the second function we note that
	\begin{equation}
		\label{eq:hyperdermod}
		\frac{d}{dz}\left( z^{-a} F\left( a, b; c; z^{-1} \right)\right) = -a z^{-a-1} F\left( a+1, b; c; z^{-1} \right) \,.
	\end{equation}
	This follows easily from \cite[(15.2.3)]{AbSt}. Therefore the facts about the derivative of the second function follow again from the same facts as for the function itself. This proves part~(b).
	
	Part (c) is proved in the same way.
\end{proof}

The following lemma and its corollary are the crucial facts needed for the proof of our main results. The important point is that they are proved for parameters $b$ that can be arbitrarily negative.

\begin{lemma}\label{hyperpos}
	Let $a,b\in\R$ and $c>0$ such that $c\geq\max\{a,b\}$. Then $F\left( a, b; c; z \right) \geq 0$ for all $z\in[0,1)$.
\end{lemma}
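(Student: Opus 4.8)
My plan is to reduce the claim, via Euler's transformation, to the trivial case in which both upper parameters are nonnegative. Recall the classical identity
$$
F(a,b;c;z) = (1-z)^{c-a-b}\,F(c-a,c-b;c;z) \,, \qquad |z|<1 \,,
$$
valid whenever $c\notin-\N_0$ as an identity of analytic functions on the open unit disc; see, e.g., \cite[(9.131.1)]{GrRy} or \cite[(15.3.3)]{AbSt}. Since $c>0$ we may apply it, and for real $z\in[0,1)$ the prefactor $(1-z)^{c-a-b}$ is understood as the positive real power and is therefore strictly positive. Hence it suffices to prove that $F(c-a,c-b;c;z)\geq0$ for all $z\in[0,1)$.

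The hypothesis $c\geq\max\{a,b\}$ says precisely that the transformed upper parameters $c-a$ and $c-b$ are both $\geq 0$, while the lower parameter $c$ is still $>0$. Writing $(\lambda)_n := \lambda(\lambda+1)\cdots(\lambda+n-1)$ with the convention $(\lambda)_0=1$, each of $(c-a)_n$ and $(c-b)_n$ is a product of nonnegative factors, hence $\geq 0$, and $(c)_n>0$. Thus every coefficient $(c-a)_n(c-b)_n/((c)_n\,n!)$ of the series is nonnegative, so for $z\in[0,1)$ every term of
$$
F(c-a,c-b;c;z) = \sum_{n=0}^\infty \frac{(c-a)_n\,(c-b)_n}{(c)_n\,n!}\,z^n
$$
is nonnegative; the series converges since $|z|<1$, as recalled after its definition above. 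Therefore $F(c-a,c-b;c;z)\geq 0$, and multiplying by the positive number $(1-z)^{c-a-b}$ gives $F(a,b;c;z)\geq0$, as desired.

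I do not expect any real obstacle here. The only points deserving a line of care are the validity of Euler's transformation, which requires only $c\notin-\N_0$ and holds throughout the open unit disc, and the interpretation of $(1-z)^{c-a-b}$ as a positive real number on $[0,1)$, which is unambiguous. If one prefers not to quote the transformation, one can verify it directly by checking that both sides satisfy the hypergeometric differential equation and agree, along with their first derivatives, at $z=0$; but invoking the standard identity is cleaner. Note that this argument also makes transparent why $b$ (or $a$) may be arbitrarily negative: decreasing $b$ only increases $c-b$, which is harmless.
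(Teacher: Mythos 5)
Your proof is correct, but it takes a genuinely different route from the paper's. The paper first reduces by continuity to $c>\max\{a,b\}$ and then splits on the sign of $b$: for $b>0$ it invokes the Euler integral representation \cite[(9.111)]{GrRy}, whose integrand is manifestly nonnegative, and for $b\le 0$ it runs an induction on $\lfloor -b\rfloor$, using the derivative identity $\frac{d}{dz}F(a,b;c;z)=\frac{ab}{c}F(a+1,b+1;c+1;z)$ to show that $F$ is monotone on $[0,1)$ and then bounding it below either by its value $1$ at $z=0$ or by the Gauss evaluation \eqref{eq:hyperone} at $z=1$, where one checks that all four gamma factors are positive. You instead apply Euler's transformation $F(a,b;c;z)=(1-z)^{c-a-b}F(c-a,c-b;c;z)$ once: the hypothesis $c\ge\max\{a,b\}$ becomes nonnegativity of both upper parameters of the transformed series, all of whose Taylor coefficients are then nonnegative, and the positive prefactor finishes the argument. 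Your route is shorter, avoids both the induction and the boundary evaluation, and in fact yields the stronger conclusion $F(a,b;c;z)\ge(1-z)^{c-a-b}>0$ on $[0,1)$; it also makes transparent, as you observe, why $b$ may be arbitrarily negative. What the paper's proof buys in exchange is self-containedness at the level of tools already quoted elsewhere in the paper (the termwise derivative formula and the value at $z=1$), whereas yours imports one additional classical identity --- which, as you note, can be verified independently via the hypergeometric differential equation if one prefers not to cite it.
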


\begin{proof}
	By continuity with respect to $c$ we may assume that $c>\max\{a,b\}$. First assume that $b>0$. Then the assertion follows from the integral representation \cite[(9.111)]{GrRy}
	\begin{equation}
		\label{eq:intrephyper}
		F\left( a, b; c; z \right) = \frac{\Gamma(c)}{\Gamma(b)\,\Gamma(c-b)} \int_0^1 t^{b-1} (1-t)^{c-b-1} (1-tz)^{-a}\,dt \,,
	\end{equation}
	valid for $c>b>0$. Note that for $b>0$ we did not use the assumption $c>a$.
	
	For $b\leq 0$ we write $-\ell\geq b> -\ell-1$ for some $\ell\in\N_0$ and prove the assertion by induction on $\ell$. We can consider the case $\ell=-1$, proved above, as the base case. For the induction step, let $\ell\geq 0$. We use the formula \eqref{eq:hyperder} for the derivative of $F\left( a, b; c; z \right)$. By induction hypothesis, we have $F\left( a+1, b+1; c+1; z \right)\geq 0$. If $a\leq 0$, we have $\frac{ab}{c}\geq 0$ and we deduce $F'\left( a, b; c; z \right)\geq 0$. Thus, $F\left( a, b; c; z \right)\geq F\left( a, b; c; 0 \right)=1$ for all $z\in[0,1)$. Conversely, if $a>0$, we have $\frac{ab}{c}\leq 0$ and we deduce $F'\left( a, b; c; z \right)  \leq 0$. Thus, $F\left( a, b; c; z \right)\geq F\left( a, b; c; 1 \right)$ for all $z\in[0,1)$. The assertion therefore follows from the formula \eqref{eq:hyperone} for $F\left( a, b; c; 1 \right)$ if one notices that under our assumptions the argument of each one of the four gamma functions is positive. This completes the proof of the lemma.
\end{proof}

\begin{corollary}\label{hypercor}
	Let $a,b\in\R$ and $c>0$ with $c\geq\max\{a,b\}$. On $[0,1)$, the function $z\mapsto F\left( a, b; c; z \right)$ is
	$$
	\begin{cases}
		\text{convex} & \text{if}\ a(a+1)b(b+1)\geq 0 \,,\\
		\text{concave} & \text{if}\ a(a+1)b(b+1)\leq 0 \,.
	\end{cases}
	$$
	If, in addition, $c\geq a+2$, then on $(1,\infty)$, the function $(1,\infty)\ni z\mapsto z^{-a} F\left( a, b; c; z^{-1} \right)$ is
	$$
	\begin{cases}
		\text{convex} & \text{if}\ a(a+1)\geq 0 \,,\\
		\text{concave} & \text{if}\ a(a+1)\leq 0 \,.
	\end{cases}
	$$
\end{corollary}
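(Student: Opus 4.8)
The plan is to deduce both convexity assertions from the nonnegativity statement of Lemma~\ref{hyperpos} by differentiating twice and keeping track of the parameter shifts, exactly as the differentiation identities \eqref{eq:hyperder} and \eqref{eq:hyperdermod} allow.

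For the first assertion, I would iterate \eqref{eq:hyperder} twice to get
$$
\frac{d^2}{dz^2} F\left( a, b; c; z \right) = \frac{ab(a+1)(b+1)}{c(c+1)}\, F\left( a+2, b+2; c+2; z \right) \,.
$$
The shifted triple $(a+2,b+2,c+2)$ still satisfies the hypotheses of Lemma~\ref{hyperpos}: we have $c+2>0$ since $c>0$, and $c+2\geq\max\{a+2,b+2\}$ is equivalent to $c\geq\max\{a,b\}$, which is assumed. Hence $F(a+2,b+2;c+2;z)\geq 0$ on $[0,1)$. Since $c(c+1)>0$, the sign of the second derivative on $[0,1)$ is that of $a(a+1)b(b+1)$, which gives convexity when $a(a+1)b(b+1)\geq 0$ and concavity when $a(a+1)b(b+1)\leq 0$.

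For the second assertion, I would set $\phi_a(z):=z^{-a}F(a,b;c;z^{-1})$ on $(1,\infty)$ and rewrite the right-hand side of \eqref{eq:hyperdermod} as $z^{-(a+1)}F(a+1,b;c;z^{-1})$, which yields the clean recursion $\phi_a'=-a\,\phi_{a+1}$ (with $b$ and $c$ unchanged). Iterating gives $\phi_a''=a(a+1)\,\phi_{a+2}$. Now $\phi_{a+2}(z)=z^{-(a+2)}F(a+2,b;c;z^{-1})$ with $z^{-1}\in(0,1)$; the extra hypothesis $c\geq a+2$ together with $c\geq b$ (from $c\geq\max\{a,b\}$) gives $c\geq\max\{a+2,b\}$, so Lemma~\ref{hyperpos} applies and $F(a+2,b;c;z^{-1})\geq 0$, whence $\phi_{a+2}\geq 0$ on $(1,\infty)$. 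Since $z^{-(a+2)}>0$, the sign of $\phi_a''$ is that of $a(a+1)$, giving the stated convexity and concavity.

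There is no genuine analytic obstacle here; the whole argument rests on Lemma~\ref{hyperpos} and the two differentiation identities already in hand. The only points requiring care are bookkeeping ones: verifying that the condition $c\geq\max\{a,b\}$ is preserved under adding $2$ to all three parameters, recognizing that $c\geq a+2$ is precisely what is needed in the second part so that $c\geq\max\{a+2,b\}$, and correctly rewriting $z^{-a-1}F(a+1,b;c;z^{-1})$ as $z^{-(a+1)}F(a+1,b;c;z^{-1})$ so that the recursion for $\phi_a$ closes on itself.
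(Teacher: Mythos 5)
Your proposal is correct and follows exactly the paper's argument: differentiate twice using \eqref{eq:hyperder} and \eqref{eq:hyperdermod} to express the second derivatives as explicit prefactors times shifted hypergeometric functions, then apply Lemma~\ref{hyperpos} to the shifted parameters. The only difference is that you spell out the bookkeeping (that $c+2\geq\max\{a+2,b+2\}$ and $c\geq\max\{a+2,b\}$ follow from the hypotheses), which the paper leaves implicit.
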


\begin{proof}
	According to \eqref{eq:hyperder} and \eqref{eq:hyperdermod}, we have
	$$
	\frac{d^2}{dz^2} F\left( a, b; c; z \right) = \frac{a(a+1)b(b+1)}{c(c+1)}\, F\left( a+2, b+2; c+2; z \right)
	$$
	and
	$$
	\frac{d^2}{dz^2}\left( z^{-a} F\left( a, b; c; z^{-1} \right)\right) = a(a+1) z^{-a-2} F\left( a+2, b; c; z^{-1} \right) \,.
	$$
	Therefore, the assertion follows from Lemma \ref{hyperpos}.
\end{proof}

The above corollary, which concerns a single hypergeometric function, suffices to deal with a large part of the parameter regime in our main theorem, but in certain cases we need a more delicate result about the difference of two hypergeometric functions.

\begin{lemma}\label{lem:Hypergeom intersect once}
	If $q > 0$, $0< a_2 < a_1$, $0< b_2 < b_1$, and $c > a_1+b_1$, then the function
	$$ 
	z\mapsto F\left( a_1, b_1; c; z \right) - q F\left( a_2, b_2; c; z \right)
	$$
	has at most one zero in $[0,1]$ and, if such a zero $z_0\in[0,1]$ exists, then the function is negative in $[0,z_0)$ and positive in $(z_0,1]$.
\end{lemma}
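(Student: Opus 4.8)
The plan is to expand $f(z) := F(a_1,b_1;c;z) - q\,F(a_2,b_2;c;z)$ into a power series and exploit a single–sign–change property of its coefficients. Writing $(\lambda)_n := \lambda(\lambda+1)\cdots(\lambda+n-1)$, set $A_n := \frac{(a_1)_n(b_1)_n}{(c)_n\,n!}$ and $B_n := \frac{(a_2)_n(b_2)_n}{(c)_n\,n!}$, so that $f(z)=\sum_{n\ge0}c_n z^n$ with $c_n := A_n - qB_n$. The hypotheses $0<a_2<a_1$, $0<b_2<b_1$, $c>a_1+b_1>0$ force $A_n,B_n>0$; and since $c-a_1-b_1>0$ (hence also $c-a_2-b_2>0$), both series converge at $z=1$ by Gauss's formula \eqref{eq:hyperone}, so $\sum_n|c_n|<\infty$ and $f$ is continuous on $[0,1]$. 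The key elementary point is that $A_n/B_n=\frac{(a_1)_n}{(a_2)_n}\cdot\frac{(b_1)_n}{(b_2)_n}$ is \emph{strictly increasing} in $n$, because the ratio of consecutive terms equals $\frac{a_1+n}{a_2+n}\cdot\frac{b_1+n}{b_2+n}>1$; moreover $A_0/B_0=1$ and $A_n/B_n\to\infty$ since $\prod_{j\ge0}\bigl(1+\frac{a_1-a_2}{a_2+j}\bigr)$ diverges. Hence, with $N:=\min\{n\ge0:\ c_n\ge0\}$ (a finite nonnegative integer), we have $c_n<0$ for $n<N$ and $c_n>0$ for $n>N$: the coefficient sequence changes sign at most once, from negative to positive.

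The heart of the argument is the claim: \emph{if $f(z_0)\ge0$ for some $z_0\in(0,1]$, then $f(z)>0$ for all $z\in(z_0,1]$.} To prove it, fix such $z_0$, take $z\in(z_0,1]$, and put $t:=z/z_0>1$; rearranging the absolutely convergent series gives
$$
f(z) - t^N f(z_0) = \sum_{n\ge0} c_n z_0^n\,(t^n - t^N).
$$
For $n<N$ the factor $t^n-t^N$ is negative and $c_n\le0$; for $n>N$ it is positive and $c_n\ge0$; the $n=N$ term vanishes. So every summand is $\ge0$, whence $f(z)\ge t^N f(z_0)\ge0$. If $f(z_0)>0$ this is already strict. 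If $f(z_0)=0$ and $f(z)=0$, then all summands vanish, which for any $n>N$ forces $c_n=0$, contradicting $c_n>0$ there; hence $f(z)>0$ in this case too, proving the claim.

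It remains to assemble the conclusion, using $f(0)=1-q$. If $q<1$, then $A_n/B_n\ge1>q$ gives $c_n>0$ for all $n$, so $f>0$ on $[0,1]$ and there is no zero. If $q=1$, then $c_0=0$ and $c_n>0$ for $n\ge1$, so $f>0$ on $(0,1]$ and the unique zero is $z_0=0$. If $q>1$, then $f(0)<0$; the set $S:=\{z\in[0,1]:\ f(z)\ge0\}$ is closed, and if it is empty then $f<0$ on $[0,1]$ with no zero, while if it is nonempty its infimum $z_0$ satisfies $z_0>0$, $f<0$ on $[0,z_0)$, and $f(z_0)=0$ by continuity, so the claim yields $f>0$ on $(z_0,1]$. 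In every case $f$ has at most one zero in $[0,1]$, and at such a zero $z_0$ it is negative on $[0,z_0)$ and positive on $(z_0,1]$, as asserted. I do not expect a genuine obstacle here; the only points needing care are the strictness step in the key claim and the verification that $A_n/B_n\to\infty$ so that $N$ is finite, and the underlying mechanism is simply the variation‑diminishing property of power series with a single coefficient sign change.
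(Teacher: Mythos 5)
Your proof is correct, and although its first half coincides with the paper's Step 1 (the coefficient sequence $c_n=\frac{(a_1)_n(b_1)_n}{(c)_n\,n!}-q\,\frac{(a_2)_n(b_2)_n}{(c)_n\,n!}$ changes sign at most once, from negative to positive, because the ratio of the two terms is strictly increasing in $n$), the way you exploit that fact is genuinely different. The paper converts the single sign change into a differential inequality $p^{-1}g'(z)>g(z)$ on $[0,1)$ and then argues via the sign of the derivative at zeros of $g$; since that inequality is only available on the open interval (the derivative need not extend to $z=1$ unless $c>a_1+b_1+1$), the endpoint forces a separate monotonicity case analysis in the paper's Step 3. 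You instead use the rearrangement $f(z)-t^{N}f(z_0)=\sum_{n}c_n z_0^{\,n}\bigl(t^{n}-t^{N}\bigr)\ge 0$ with $t=z/z_0>1$, a variation-diminishing comparison that needs only the absolute convergence of the series on $[0,1]$ (guaranteed by $c>a_1+b_1$) and hence treats the closed interval, endpoint included, in one stroke, with no differentiation at all. Two minor observations: you prove unconditionally that the sign-change index $N$ is finite (via $A_n/B_n\to\infty$), whereas the paper only needs, and only establishes, its existence under the standing assumption that the function is somewhere nonnegative --- a harmless strengthening; and in your key claim the strict positivity of $f(z)$ follows immediately from the fact that every summand with $n>N$ is strictly positive (as $z_0>0$ and $t>1$), so the contradiction argument for the case $f(z_0)=f(z)=0$ could be omitted. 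What the paper's route buys is a quantitative growth inequality for $g$ that is potentially reusable; what yours buys is a shorter, more elementary argument and a cleaner treatment of the boundary point $z=1$.
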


Here, in case $z_0=0$ we use the convention $[0,z_0)=\emptyset$ and consequently we make no assertion about the function on this empty set. A similar remark applies in case $z_0=1$.

\begin{proof}
	 In terms of the Pochhammer symbol $(d)_n:=\frac{\Gamma(d+n)}{\Gamma(d)}$  the function in the lemma can be written as
	$$
	g(z) := F\left( a_1, b_1; c; z \right) - q F\left( a_2, b_2; c; z \right)  =
	\sum_{n=0}^{\infty} \Big[ \frac{(a_1)_n (b_1)_n}{(c)_n n!} - q \frac{(a_2)_n (b_2)_n}{(c)_n n!} \Big] z^n \,.
	$$
	For the proof we may assume that there is a $z_1\in[0,1]$ such that $g(z_1)\geq 0$, as otherwise there is nothing to show. We now break our proof into three steps.
	
	\medskip
	
	\emph{Step 1.} We claim that there is an $N \in \mathbb{N}_0$ such that
	$$ 
	(a_1)_n (b_1)_n \leq q(a_2)_n (b_2)_n
	\qquad\text{for}\  n< N
	$$
	and
	$$ 
	(a_1)_n (b_1)_n > q(a_2)_n (b_2)_n
	\qquad\text{for}\ n\geq N \,.
	$$
	(Here and below an assertion for $n<N$ is trivially satisfied if $N=0$.)
	
	To prove this, we first observe that there is an $n\in\N_0$ such that $(a_1)_n (b_1)_n \geq q(a_2)_n (b_2)_n$. Indeed, otherwise we have $(a_1)_n (b_1)_n < q(a_2)_n (b_2)_n$ for all $n\in\N_0$, which would imply that $g(z)<0$ for all $z\in[0,1]$, contradicting our assumption $g(z_1)\geq 0$. By choosing the minimal $n$, we obtain the existence of an $N_0\in\N_0$ such that $(a_1)_n(b_1)_n< q (a_2)_n (b_2)_n$ for all $n<N_0$ and $(a_1)_{N_0} (b_1)_{N_0} \geq q (a_2)_{N_0} (b_2)_{N_0}$.

	Our second observiation is that if $(a_1)_n (b_1)_n \geq q(a_2)_n (b_2)_n$ for some $n\in\N_0$, then
	\begin{align*}
		(a_1)_{n+1} (b_1)_{n+1} &= (a_1+n)(b_1+n) (a_1)_n (b_1)_n \\		
		& \geq (a_1 +n) (b_1 + n) q (a_2)_n (b_2)_n 
		>  q (a_2)_{n+1} (b_2)_{n+1} \,.
	\end{align*}
	This implies that $(a_1)_n (b_1)_n > q(a_2)_n (b_2)_n$ for all $n>N_0$. The claim made at the beginning of this step now follows with 
	$$
	N:=
	\begin{cases}
		N_0  & \text{if}\ (a_1)_{N_0} (b_1)_{N_0} > q (a_2)_{N_0} (b_2)_{N_0} \,,\\
		N_0 + 1 & \text{if}\ (a_1)_{N_0} (b_1)_{N_0} = q (a_2)_{N_0} (b_2)_{N_0} \,.
	\end{cases}
	$$

	\medskip
	
	\textit{Step 2.} With $N$ from Step 1 we set $p = \frac{(a_1+N)(b_1+N)}{c+N}$ and show that
	\begin{equation}
		\label{eq:differentialinequality}
		\frac1p\, g'(z) > g(z)
		\qquad\text{for all}\ z\in[0,1) \,.
	\end{equation}
	
	Indeed, by \eqref{eq:hyperder},
	\begin{align*}
		\frac{1}{p} g'(z) & = \frac{1}{p} \Big[ \frac{a_1 b_1}{c}F\left( a_1+1, b_1+1; c+1; z \right) - q \frac{a_2 b_2}{c} F\left( a_2+1, b_2+1; c+1; z \right) \Big]\\
		& = \frac{1}{p} \sum_{n=0}^{\infty} \Big[ \frac{(a_1)_{n+1} (b_1)_{n+1}}{(c)_{n+1} n!} - q \frac{(a_2)_{n+1} (b_2)_{n+1}}{(c)_{n+1} n!} \Big] z^n \\
		& = \sum_{n=0}^{\infty} \frac{(a_1 +n) (b_1 + n)}{p(c+n)} \Big[ \frac{(a_1)_{n} (b_1)_{n}}{(c)_{n} n!} - q \frac{(a_2 + n) (b_2 +n)}{(a_1 +n) (b_1 + n)} \frac{(a_2)_{n} (b_2)_{n}}{(c)_{n} n!} \Big] z^n \\
		& > \sum_{n=0}^{\infty} \frac{(a_1 +n) (b_1 + n)}{p(c+n)} \Big[ \frac{(a_1)_{n} (b_1)_{n}}{(c)_{n} n!} - q  \frac{(a_2)_{n} (b_2)_{n}}{(c)_{n} n!} \Big] z^n \,.
	\end{align*}
	In the last inequality, we used our assumptions $0<a_2<a_1$ and $0<b_2<b_1$.

	Since $\frac{(a_1 +n) (b_1 + n)}{p (c+n)}$ is an increasing function of $n$ and is one for $n = N$, we see that for $n < N$,
	$$\frac{(a_1 +n) (b_1 + n)}{p (c+n)} \Big[ \frac{(a_1)_{n} (b_1)_{n}}{(c)_{n} n!} - q  \frac{(a_2)_{n} (b_2)_{n}}{(c)_{n} n!} \Big] \geq  \frac{(a_1)_{n} (b_1)_{n}}{(c)_{n} n!} - q  \frac{(a_2)_{n} (b_2)_{n}}{(c)_{n} n!} \,,$$
	and for $n \geq N$,
	$$\frac{(a_1 +n) (b_1 + n)}{p (c+n)} \Big[ \frac{(a_1)_{n} (b_1)_{n}}{(c)_{n} n!} - q  \frac{(a_2)_{n} (b_2)_{n}}{(c)_{n} n!} \Big] > \frac{(a_1)_{n} (b_1)_{n}}{(c)_{n} n!} - q  \frac{(a_2)_{n} (b_2)_{n}}{(c)_{n} n!} \,.$$
	Inserting this into our lower bound on $g'(z)$, we obtain the differential inequality \eqref{eq:differentialinequality}.
	
	\medskip
	
	\emph{Step 3.} Using \eqref{eq:differentialinequality}, we can now complete the proof of the lemma.
	
	We first claim that $g$ has at most one zero in the half-open interval $[0,1)$ and that, if such a zero $z_0\in[0,1)$ exists, then $g<0$ on $[0,z_0)$ and $g>0$ on $(z_0,1)$. Indeed, at every zero $z_0\in[0,1)$ of $g$ we deduce from \eqref{eq:differentialinequality} that $\frac1p g'(z_0)>g(z_0)=0$. This implies that $g$ has at most one zero in $[0,1)$, for if it had more than one zero, its derivative would have to be nonpositive at at least one of the zeroes. This proves the claim about the interval $[0,1)$. (We note in passing that the same argument implies the full statement of the lemma under the additional assumption $c>a_1+b_1+1$. Indeed, under this assumption $g'$ extends continuously to $z=1$ by Lemma \ref{hypercont} and the proof above shows that \eqref{eq:differentialinequality} holds for $z=1$ as well.)
	
	We have to consider two cases, namely $g$ has either no or exactly one zero in $[0,1)$. In the latter case, we know that $g>0$ on $(z_0,1)$. By the differential inequality \eqref{eq:differentialinequality}, this implies that $g$ is increasing in $(z_0,1)$ and, consequently, $g(1)>0$. This proves the lemma in this case.
	
	Assume now that $g$ has no zero in $[0,1)$. Then either $g>0$ on $[0,1)$ or $g<0$ on $[0,1)$. In the first case we can argue as before, deducing from the differential inequality \eqref{eq:differentialinequality} that $g$ is increasing on $[0,1)$ and therefore has no zero in $[0,1]$. Meanwhile, when $g<0$ on $[0,1)$, the existence of $z_1$ with $g(z_1)\geq0$ implies that $g$ has precisely one zero in $[0,1]$ and is negative to the left of it, as claimed. This completes the proof.
\end{proof}


\section{Potentials as hypergeometric series}

Our goal in the present section is to express the potential of the candidate minimizers in our main theorems in terms of hypergeometric functions, whose definition we recalled in the previous section. We deal separately with the two parts of the potential and write our formulas in terms of a parameter $\gamma$ that will later take the values $\alpha$ or~$\beta$.

\begin{lemma}\label{pothyper2}
	Let $d\geq 2$ and $\gamma\in\R$. Then, for all $x\in\R^d$,
	\begin{align}\label{eq:pothyper2}
		&\int_{\Sph^{d-1}} |x-\omega|^\gamma \,d\omega = 2\, \pi^\frac d2 \frac{1}{\Gamma(\frac d2)} \times 
		\begin{cases}
			|x|^{\gamma} F \left(-\frac\gamma2,\frac{2-\gamma-d}{2};\frac d2;|x|^{-2} \right) & \text{if}\ |x|>1 \,,\\
			F \left(-\frac\gamma2,\frac{2-\gamma-d}{2};\frac d2;|x|^2\right) & \text{if}\ |x|<1 \,.
		\end{cases}
	\end{align}
\end{lemma}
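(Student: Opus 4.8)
The plan is to reduce the surface integral to a one-dimensional integral over an angle and then recognize it as a hypergeometric series by means of Euler's integral representation \eqref{eq:intrephyper}. First, by rotational invariance the left side of \eqref{eq:pothyper2} depends only on $r:=|x|$; decomposing $\Sph^{d-1}$ according to the angle $\theta\in[0,\pi]$ between $\omega$ and $x/|x|$, so that $d\omega=|\Sph^{d-2}|\,(\sin\theta)^{d-2}\,d\theta$ and $|x-\omega|^2=1-2r\cos\theta+r^2$, one obtains
\[
\int_{\Sph^{d-1}}|x-\omega|^\gamma\,d\omega=|\Sph^{d-2}|\int_0^\pi\bigl(1-2r\cos\theta+r^2\bigr)^{\gamma/2}(\sin\theta)^{d-2}\,d\theta .
\]
Since $d\geq2$ the exponent $d-2$ is nonnegative and the integral converges, and for $r\neq1$ the base $1-2r\cos\theta+r^2\geq(1-r)^2$ stays strictly positive, so raising it to the real power $\gamma/2$ is harmless.

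For $|x|<1$ I would put this into the shape of \eqref{eq:intrephyper}. The substitution $s=\tfrac12(1+\cos\theta)$ gives $(\sin\theta)^{d-2}\,d\theta=2\,(4s(1-s))^{(d-3)/2}\,ds$ and $1-2r\cos\theta+r^2=(1+r)^2\bigl(1-\tfrac{4r}{(1+r)^2}s\bigr)$, so that the integral equals $|\Sph^{d-2}|\,2^{d-2}(1+r)^\gamma$ times $\int_0^1 s^{(d-3)/2}(1-s)^{(d-3)/2}\bigl(1-\tfrac{4r}{(1+r)^2}s\bigr)^{\gamma/2}\,ds$. With $b=\tfrac{d-1}{2}$, $c=d-1$ (so $c>b>0$ since $d\geq2$) and $z=\tfrac{4r}{(1+r)^2}\in[0,1)$, formula \eqref{eq:intrephyper} identifies the last integral with $\bigl(\Gamma(\tfrac{d-1}{2})^2/\Gamma(d-1)\bigr)\,F\!\bigl(-\tfrac\gamma2,\tfrac{d-1}{2};d-1;\tfrac{4r}{(1+r)^2}\bigr)$. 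I would then apply the quadratic transformation
\[
F\!\Bigl(a,b;2b;\tfrac{4\zeta}{(1+\zeta)^2}\Bigr)=(1+\zeta)^{2a}\,F\!\Bigl(a,\,a-b+\tfrac12;\,b+\tfrac12;\,\zeta^2\Bigr),\qquad 0\leq\zeta<1,
\]
which is classical (and in any case is immediate by comparing power series), with $\zeta=r$, $a=-\tfrac\gamma2$, $b=\tfrac{d-1}{2}$: this turns the argument into $r^2$, turns the parameters into $\bigl(-\tfrac\gamma2,\tfrac{2-\gamma-d}{2};\tfrac d2\bigr)$, and produces a factor $(1+r)^{-\gamma}$ cancelling the prefactor $(1+r)^\gamma$. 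It then remains to simplify the constant, and using $|\Sph^{d-2}|=2\pi^{(d-1)/2}/\Gamma(\tfrac{d-1}{2})$ together with the Legendre duplication formula $\Gamma(d-1)=2^{d-2}\pi^{-1/2}\Gamma(\tfrac{d-1}{2})\Gamma(\tfrac d2)$ one checks that $|\Sph^{d-2}|\,2^{d-2}\,\Gamma(\tfrac{d-1}{2})^2/\Gamma(d-1)=2\pi^{d/2}/\Gamma(\tfrac d2)$, which gives the second line of \eqref{eq:pothyper2}. (Alternatively, one may quote the tabulated value of the one-dimensional integral above.)

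For $|x|>1$ I would use the inversion identity $|x-\omega|^2=|x|^2\bigl|\tfrac{x}{|x|^2}-\omega\bigr|^2$, valid because $|\omega|=1$, which gives $\int_{\Sph^{d-1}}|x-\omega|^\gamma\,d\omega=|x|^\gamma\int_{\Sph^{d-1}}|\tilde x-\omega|^\gamma\,d\omega$ with $\tilde x=x/|x|^2$, $|\tilde x|=|x|^{-1}<1$, and the previously treated case applied to $\tilde x$ yields the first line of \eqref{eq:pothyper2}. The only genuine difficulty is the bookkeeping in the second step: one must pick precisely the quadratic transformation that sends the argument to $|x|^2$ and the lower parameter to $d/2$, and then verify that the various powers of $2$, the Beta value and the duplication formula combine exactly into the prefactor $2\pi^{d/2}/\Gamma(d/2)$; everything else is routine.
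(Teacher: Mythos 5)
Your proof is correct and follows essentially the same route as the paper's: polar coordinates, the Euler integral representation to produce $F\bigl(-\tfrac\gamma2,\tfrac{d-1}{2};d-1;\tfrac{4r}{(1+r)^2}\bigr)$, and the quadratic transformation (Gradshteyn--Ryzhik (9.134.2)) together with the duplication formula for the constant. The only (harmless) deviation is in the case $|x|>1$: you reduce it to $|x|<1$ via the inversion $|x-\omega|=|x|\,\bigl|x/|x|^2-\omega\bigr|$ on the unit sphere, whereas the paper applies the same quadratic transformation with $\zeta=|x|^{-1}$, exploiting the invariance of $4\zeta/(1+\zeta)^2$ under $\zeta\mapsto\zeta^{-1}$ --- two formulations of the same fact.
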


If $\gamma>-d+1$, then Lemma \ref{hypercont} shows that the right side in \eqref{eq:pothyper2} is continuous at $|x|=1$ and the proof below shows that the identity \eqref{eq:pothyper2} extends to $|x|=1$.

\begin{proof}	
	By introducing polar coordinates, we find
	\begin{align*}
		\int_{\Sph^{d-1}} |x-\omega|^\gamma \,d\omega & = |\Sph^{d-2}| \int_0^\pi \left( |x|^2 - 2 |x|\cos\theta + 1 \right)^{\frac\gamma2} \sin^{d-2}\theta\,d\theta \\
		& = |\Sph^{d-2}| \int_{-1}^1 \left( |x|^2 - 2 |x| t + 1 \right)^{\frac\gamma2} (1-t^2)^\frac{d-3}{2} \,dt \\
		& = 2 |\Sph^{d-2}| (1+|x|)^\gamma \int_0^1 (1- \tfrac{4}{(1+|x|)^2} u)^\frac{\gamma}{2} (1-u)^{\frac{d-3}{2}} u^\frac{d-3}{2}\,du \\
		& = 2 |\Sph^{d-2}| \frac{\Gamma(\frac{d-1}{2})^2}{\Gamma(d-1)}\, (1+|x|)^\gamma \, F \left(-\tfrac\gamma2,\tfrac{d-1}{2};d-1;\tfrac{4|x|}{(1+|x|)^2}\right) \,.
	\end{align*}
	Here we have successively changed variables $\cos\theta=\omega\cdot x/|x|$, $t=\cos\theta$, and $u= (1+t)/2$ and used the integral representation of the hypergeometric series \cite[(9.111)]{GrRy}. Inserting the expression for $|\Sph^{d-2}|$, we arrive at
	\begin{equation}
		\label{eq:pothyper2alt}
		\int_{\Sph^{d-1}} |x-\omega|^\gamma \,d\omega = 2\, \pi^\frac d2 \frac{1}{\Gamma(\frac d2)} \, (|x|+1)^\gamma F \left(-\tfrac{\gamma}{2},\tfrac{d-1}{2};d-1;\tfrac{4|x|}{(|x|+1)^2}\right)
		\qquad\text{if}\ |x|\neq 1 \,.
	\end{equation}
	The claimed formula \eqref{eq:pothyper2} follows from \eqref{eq:pothyper2alt} by a transformation formula for the hypergeometric series; see \cite[(9.134.2)]{GrRy}.	
\end{proof}

Note that in the previous proof we have found an alternative expression for the left side in Lemma \ref{pothyper2}, namely \eqref{eq:pothyper2alt}. While the latter does not involve a case distinction and might be aesthetically more pleasing, we have found the formula \eqref{eq:pothyper2} the more useful one for our purposes.

\begin{lemma}\label{pothyper1}
	Let $d\geq 1$ and $-d<\gamma<-d+4$. Then, for all $x\in\R^d$,
	\begin{align}\label{eq:pothyper1}
		& \int_{|y|<1}  |x-y|^\gamma (1-|y|^2)^{\frac{2-\gamma-d}{2}}\,dy = 
		\pi^{\frac d2} \, \frac{\Gamma(\frac{4-\gamma-d}{2})\,\Gamma(\frac{\gamma+d}2)}{\Gamma(\frac d2)} \,
		 \notag \\
		& \qquad \times
		\begin{cases}
			\frac{\Gamma(\frac d2)}{\Gamma(2-\frac\gamma2) \, \Gamma(\frac{\gamma+d}{2})} \, |x|^{\gamma} F \left(-\frac\gamma2,\frac{2-\gamma-d}{2};2-\frac\gamma2;|x|^{-2}\right) & \text{if}\ |x|>1 \,,\\
			F \left(-\frac\gamma2,-1;\frac d2;|x|^2\right) & \text{if}\ |x|<1 \,.
		\end{cases}
	\end{align}
\end{lemma}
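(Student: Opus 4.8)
The plan is to reduce this computation to the one already carried out in Lemma \ref{pothyper2} by slicing the ball into spheres and then summing the contributions with a weight. Writing $y = r\omega$ with $r\in(0,1)$ and $\omega\in\Sph^{d-1}$, we have
\[
\int_{|y|<1} |x-y|^\gamma (1-|y|^2)^{\frac{2-\gamma-d}{2}}\,dy
= \int_0^1 r^{d-1} (1-r^2)^{\frac{2-\gamma-d}{2}} \left( \int_{\Sph^{d-1}} |x - r\omega|^\gamma\,d\omega \right) dr \,.
\]
For the inner integral we apply Lemma \ref{pothyper2} after rescaling: $\int_{\Sph^{d-1}} |x-r\omega|^\gamma\,d\omega = r^\gamma \int_{\Sph^{d-1}} |x/r - \omega|^\gamma\,d\omega$, which by \eqref{eq:pothyper2} equals $2\pi^{d/2}\Gamma(d/2)^{-1}$ times $|x|^\gamma F(-\gamma/2,(2-\gamma-d)/2;d/2;r^2|x|^{-2})$ when $r<|x|$, and times $r^\gamma F(-\gamma/2,(2-\gamma-d)/2;d/2;r^{-2}|x|^2)$ when $r>|x|$. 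For $|x|>1$ the first case applies for all $r\in(0,1)$, whereas for $|x|<1$ we must split the $r$-integral at $r=|x|$.

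The main computational step is then to evaluate the resulting one-dimensional integrals. Expanding the hypergeometric function $F(-\gamma/2,(2-\gamma-d)/2;d/2;w)=\sum_n \frac{(-\gamma/2)_n((2-\gamma-d)/2)_n}{(d/2)_n n!} w^n$ as a power series and integrating term by term against $r^{d-1+\gamma}(1-r^2)^{(2-\gamma-d)/2}$ (respectively $r^{d-1}(1-r^2)^{(2-\gamma-d)/2}$) produces Beta integrals $\int_0^1 r^{2n+\cdots}(1-r^2)^{(2-\gamma-d)/2}\,dr = \tfrac12 B(n+\cdots, (4-\gamma-d)/2)$, which recombine into a new hypergeometric series. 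One should check that the Pochhammer ratios collapse so that the $(d/2)_n$ in the denominator is cancelled, leaving exactly $F(-\gamma/2,-1;d/2;|x|^2)$ in the case $|x|<1$ — note the parameter $-1$, which truncates the series after two terms, so this is really the assertion that the whole integral is a specific quadratic polynomial in $|x|^2$; that makes the $|x|<1$ case checkable by a direct (if tedious) term-by-term match, and it is a sanity check on the Beta-integral bookkeeping. The overall constant $\pi^{d/2}\Gamma((4-\gamma-d)/2)\Gamma((\gamma+d)/2)/\Gamma(d/2)$ should fall out of the $n=0$ term together with the factor $2\pi^{d/2}/\Gamma(d/2)$ from Lemma \ref{pothyper2} and the $\tfrac12$ from the substitution $r^2\mapsto s$.

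The delicate point is the case $|x|>1$: here one must also justify interchanging the sum over $n$ with the $r$-integral near $r=1$, where $(1-r^2)^{(2-\gamma-d)/2}$ may be singular but is integrable precisely because $\gamma<-d+4$, and then identify the resulting series. A cleaner route for $|x|>1$, which I expect to be the one used, is to avoid re-expanding: instead recognize that for $|x|>1$ the left-hand side is, up to the known constant $C_\gamma$ of \eqref{eq:defca}, the Newtonian-type potential $\int |x-y|^\gamma\,d\mu(y)$ of the measure $\mu$ from Theorem \ref{main} evaluated outside its support, and that such exterior potentials of this specific one-parameter family are known in closed form (this is exactly the content that makes Theorem \ref{main} work); alternatively, one analytically continues the $|x|<1$ formula. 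The honest obstacle, therefore, is not conceptual but combinatorial: verifying that the Beta-function bookkeeping in the term-by-term integration reproduces precisely the stated hypergeometric parameters and the constant, and handling the convergence/interchange at the endpoint $r=1$ using $-d<\gamma<-d+4$.
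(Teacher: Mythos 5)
Your plan diverges from the paper's proof, which does not slice into spheres at all: it first reduces by analyticity in $\gamma$ to the range $-d<\gamma<\min\{0,-d+4\}$, then computes the radial Fourier transforms of $(1-|y|^2)^{(2-\gamma-d)/2}\1_{B_1}$ and of $|x|^{\gamma}$ via Bessel functions, and evaluates the resulting convolution with the Weber--Schafheitlin integrals \cite[(6.574.1), (6.574.3)]{GrRy}, which produce the two hypergeometric forms directly. Your route is workable for $|x|>1$: there the inner sphere is always in the ``exterior'' regime, the argument of $F$ stays in $[0,|x|^{-2}]$, the term-by-term integration does give complete Beta integrals, and the Pochhammer bookkeeping does collapse $\Gamma(n+\tfrac d2)/(\tfrac d2)_n$ and convert $\Gamma(n+\tfrac{4-\gamma}2)$ into $(2-\tfrac\gamma2)_n$, yielding exactly the stated answer; this is a legitimate, more elementary alternative for that case (modulo the fact that Lemma \ref{pothyper2} is only stated for $d\geq2$, so $d=1$ needs a separate word).

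The genuine gap is the case $|x|<1$. There the formula from Lemma \ref{pothyper2} changes form at $r=|x|$, so the radial integral must be split into $\int_0^{|x|}$ and $\int_{|x|}^1$, and neither piece produces the complete Beta integrals $\int_0^1 r^{2n+\cdots}(1-r^2)^{(2-\gamma-d)/2}\,dr$ you invoke; they produce incomplete Beta functions, hence double series in $|x|^2$, and the assertion that their sum collapses to the linear polynomial $1+\tfrac{\gamma}{d}|x|^2$ is a nontrivial hypergeometric identity that your proposal does not establish (``checkable by a direct term-by-term match'' understates this: each coefficient of $|x|^{2k}$ is itself an infinite sum). Matters are worsened by the fact that for $\gamma\leq 1-d$ the series $F(-\tfrac\gamma2,\tfrac{2-\gamma-d}2;\tfrac d2;z)$ does not converge at $z=1$, which is exactly the value approached as $r\to|x|$. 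Your two fallback suggestions for avoiding re-expansion do not repair this: identifying the left side with the exterior potential of the minimizer of Theorem \ref{main} is circular (that theorem is proved \emph{from} this lemma), and analytic continuation from one side of $|x|=1$ to the other fails because the potential is not given by a single analytic expression across the sphere (it is affine in $|x|^2$ inside and genuinely transcendental outside). To make your approach complete you would need to actually carry out and resum the split integrals, or replace the $|x|<1$ case by a different argument such as the Fourier computation the paper uses.
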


\begin{proof}
	For fixed $x\in\R^d$, both sides of the claimed formula \eqref{eq:pothyper1} are analytic functions of $\gamma$ in $\{\gamma \in \mathbb{C} :\ -d<\re\gamma<-d+4\}$, the restrictions on $\gamma$ coming on the left side of the claimed formula from the local integrability properties of the two factors in the integrand and on the right side from the domain of analyticity of the gamma functions. Consequently, it suffices to prove this formula in the range $-d < \gamma < \min\{ 0, -d+4\}$. In the remainder of this step, we impose these restrictions.
	
	In the proof, we will make repeated use of the following formula for the inverse Fourier transform of a radial function,
	\begin{equation}
		\label{eq:ftfrad}
		(2\pi)^{-\frac d2} \int_{\R^d} f(|\xi|) e^{i\xi\cdot x}\,d\xi = |x|^{-\frac{d-2}{2}} \int_0^\infty k^\frac d2 f(k) J_\frac{d-2}{2}(k|x|)\,dk \,,
	\end{equation}
	where $J_\frac{d-2}{2}$ is the Bessel function of the first kind of order $\frac{d-2}{2}$. As a first consequence of this formula, we obtain
	\begin{align}
		\label{eq:ftmin}
		(1-|x|^2)^{\frac{2-\gamma-d}{2}}\1_{(-1,1)}(|x|) = (2\pi)^{-\frac d2} \, \frac{\Gamma(\frac{4-\gamma-d}{2})}{2^\frac{\gamma+d-2}{2}} \int_{\R^d} |\xi|^{-1+\frac\gamma2} J_{1-\frac\gamma2}(|\xi|) e^{i\xi\cdot x}\,d\xi \,.
	\end{align}
Indeed, this follows from \eqref{eq:ftfrad} together with \cite[(6.575.1)]{GrRy}, which says that for $\alpha,\beta>0$ and $\re\nu+1>\re\mu>-1$,
	$$
	\int_0^\infty J_{\nu+1}(\alpha t) J_\mu(\beta t) t^{\mu-\nu} \,dt =
	\begin{cases}
		0 & \text{if}\ \alpha<\beta \,,\\
		\frac{1}{2^{\nu-\mu}\Gamma(\nu-\mu+1)}\ \frac{\beta^\mu \, (\alpha^2-\beta^2)^{\nu-\mu}}{\alpha^{\nu+1}} & \text{if}\ \alpha\geq \beta \,.
	\end{cases}
	$$

	Next, under the restrictions $-d<\gamma<0$, we have
	\begin{align}
		\label{eq:ftxalpha}
		|x|^{\gamma} = (2\pi)^{-\frac d2} \, 2^{\gamma+\frac d2} \,\frac{\Gamma(\frac{\gamma+d}2)}{\Gamma(-\frac{\gamma}2)}\, \int_{\R^d} |\xi|^{-d-\gamma} e^{i\xi\cdot x}\,d\xi \,.
	\end{align}
	This is formula is well known. It can also be derived from \cite[(6.561.14)]{GrRy} via \eqref{eq:ftfrad}.
	
	Combining \eqref{eq:ftmin} and \eqref{eq:ftxalpha}, we find that
	$$
	\int_{|y|<1} |x-y|^\gamma (1-|y|^2)^{\frac{2-\gamma-d}{2}}\,dy = 2^{\frac\gamma 2+1} \, \frac{\Gamma(\frac{4-\gamma-d}{2})\,\Gamma(\frac{\gamma+d}2)}{\Gamma(-\frac{\gamma}2)} \int_{\R^d} |\xi|^{-d-1-\frac\gamma2} J_{1-\frac\gamma2}(|\xi|) e^{i\xi\cdot x}\,d\xi \,.
	$$
	Using once again \eqref{eq:ftfrad}, we can rewrite the right side as
	$$
	(2\pi)^{\frac d2} \, 2^{\frac\gamma 2+1} \frac{\Gamma(\frac{4-\gamma-d}{2})\,\Gamma(\frac{\gamma+d}2)}{\Gamma(-\frac{\gamma}2)} |x|^{-\frac{d-2}{2}} \int_0^\infty k^{-\frac{\gamma+d+2}2} J_{1-\frac\gamma 2}(k) J_{\frac{d-2}2}(k|x|)\,dk \,.
	$$
	The formula in the lemma now follows from formulas \cite[(6.574.1) \& (6.574.3)]{GrRy}, which say that
	$$
	\int_0^\infty \! J_\nu(\alpha t) J_\mu(\beta t) t^{-\lambda}\,dt \! = \!
	\begin{cases}
		& \!\!\! \frac{\Gamma(\frac{\nu+\mu-\lambda+1}{2})}{2^\lambda\,\Gamma(\frac{-\nu+\mu+\lambda+1}{2})\,\Gamma(\nu+1)}
		\frac{\alpha^\nu}{\beta^{\nu-\lambda+1}} F \left( \frac{\nu+\mu-\lambda+1}{2},\frac{\nu-\mu-\lambda+1}{2};\nu+1;\frac{\alpha^2}{\beta^2}\right) \\
		& \qquad \text{if}\ 0<\alpha<\beta \,,\\
		& \\
		& \!\!\! \frac{\Gamma(\frac{\nu+\mu-\lambda+1}{2})}{2^\lambda\,\Gamma(\frac{\nu-\mu+\lambda+1}{2})\,\Gamma(\mu+1)}
		\frac{\beta^\mu}{\alpha^{\mu-\lambda+1}} F \left(\frac{\nu+\mu-\lambda+1}{2},\frac{-\nu+\mu-\lambda+1}{2};\mu+1;\frac{\beta^2}{\alpha^2}\right) \\
		& \qquad \text{if}\ 0<\beta<\alpha \,,
	\end{cases}
	$$
	provided that $\re(\nu+\mu-\lambda+1)>0$ and $\re\lambda>-1$. (We have corrected two misprints in \cite{GrRy} in the second line of the formula. Namely, we replaced $\beta^\nu$ by $\beta^\mu$ and we replaced $\Gamma(\nu+1)$ by $\Gamma(\mu+1)$. Indeed, the second line of the formula follows from the first one if one interchanges simultaneously the roles of $\alpha$ and $\beta$ and of $\nu$ and $\mu$. Note also that the restriction $\re(\nu+\mu-\lambda+1)>0$ is satisfied in our case since $\gamma<0$. This completes the proof of Lemma \ref{pothyper1}.	
\end{proof}

We end this section by noting that Lemma \ref{pothyper2} can alternatively be proved similarly as Lemma~\ref{pothyper1}.

\begin{proof}[Second proof of Lemma \ref{pothyper2}]
	We only sketch the differences to the proof of Lemma \ref{pothyper1}. Again, using analyticity, we restrict ourselves to the range $-d<\gamma<0$.
	
	The role of formula \eqref{eq:ftmin} is played by the formula
	$$
	\delta_{\Sph^{d-1}}(x) = (2\pi)^{-\frac d2} \int_{\R^d} |\xi|^{-\frac{d-2}{2}} J_\frac{d-2}{2}(|\xi|) e^{i\xi\cdot x}\,d\xi \,.
	$$
	This follows by Fourier inversion from the formula \eqref{eq:ftfrad} when $f$ is a delta measure at $k=1$.
	
	Combining this with \eqref{eq:ftxalpha} and \eqref{eq:ftfrad}, we obtain
	$$
	\int_{\Sph^{d-1}} |x-\omega|^\gamma \,d\omega = (2\pi)^\frac d2 2^{\gamma+\frac d2} \frac{\Gamma(\frac{\gamma+d}{2})}{\Gamma(\frac\gamma 2)}\, |x|^{-\frac{d-2}{2}} \int_0^\infty k^{-d-\gamma+1} J_\frac{d-2}{2}(k) J_\frac{d-2}{2}(k|x|)\,dk \,.
	$$
	The claimed formula now follows using the formula for the integral of two Bessel functions with a power function, given in the previous proof.
\end{proof}

If we use \cite[(6.576.2)]{GrRy} for the integral of the product of two Bessel functions instead of the formula given in the proof of Lemma \ref{pothyper1}, we arrive at \eqref{eq:pothyper2alt}.
	

\section{Convexity}

The Euler--Lagrange equations for our minimization problem say that the potential is constant on the support of the minimizing measure and at least as big outside of this support; see, e.g., \cite{BaCaLaRa1}. The following lemma, which says that these necessary conditions for a minimizer are, in fact, sufficient, plays a fundamental role in the proof of our main results. It is at this point that the assumption $2\leq\alpha\leq 4$ enters.

\begin{lemma}\label{riesz}
	Let $d\geq 1$ and let $-d<\beta\leq 2 \leq \alpha\leq 4$ with $\beta<\alpha$. Assume that there are $\mu\in P(\R^d)$ and $\eta\in\R$ such that
	$$
	\phi_{\alpha,\beta}(x) := \int_{\R^d} \left( \alpha^{-1} |x-y|^\alpha - \beta^{-1} |x-y|^\beta \right)d\mu(y) \,,
	\qquad x\in\R^d \,,
	$$
	satisfies
	\begin{equation*}
		\phi_{\alpha,\beta}\geq \eta
		\quad\text{on}\ \R^d
		\qquad\text{and}\qquad
		\phi_{\alpha,\beta} = \eta
		\quad\text{on}\ \supp\mu \,.
	\end{equation*}
	Then $\mu$ is a minimizer for $\mathcal{E}_{\alpha,\beta}$ and $\eta=2E_{\alpha,\beta}$.
	If $(\alpha,\beta)\neq (4,2)$, then $\mu$ is the unique minimizer up to translations.
\end{lemma}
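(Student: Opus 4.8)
The plan is to deduce everything from the (conditional) positive definiteness of the kernel $W_{\alpha,\beta}(z) = \alpha^{-1}|z|^\alpha - \beta^{-1}|z|^\beta$ on an appropriate class of signed measures. First I would recall the classical facts: the kernel $|z|^\gamma$ is (strictly) conditionally negative definite for $0<\gamma<2$, i.e.\ $\iint |x-y|^\gamma\,d\nu(x)\,d\nu(y)\leq 0$ for every finite signed measure $\nu$ with $\nu(\R^d)=0$ and compact support, with equality iff $\nu=0$; the logarithmic kernel ($\gamma=0$) behaves the same way; and for $2\leq\gamma\leq 4$ one has a ``second-order'' conditional positive definiteness, namely $\iint |x-y|^\gamma\,d\nu(x)\,d\nu(y)\geq 0$ whenever in addition $\int x\,d\nu(x)=0$ (the annihilator now includes first moments), with equality iff $\nu=0$ when $2<\gamma<4$ — this is exactly where $2\leq\alpha\leq 4$ is used, and the degenerate case $\alpha=4$ (where a whole family of measures gives equality) is why $(\alpha,\beta)=(4,2)$ is excluded. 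I would cite a standard reference for these Fourier-side computations (e.g.\ via the formula for the Fourier transform of $|x|^\gamma$ as a homogeneous distribution). Consequently, for $-d<\beta\leq 2\leq\alpha\leq 4$, the quadratic form $\nu\mapsto \iint W_{\alpha,\beta}(x-y)\,d\nu(x)\,d\nu(y)$ is $\geq 0$ on the space of compactly supported finite signed measures $\nu$ with $\nu(\R^d)=0$ and $\int x\,d\nu(x)=0$, and is strictly positive there unless $(\alpha,\beta)=(4,2)$.

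Next I would carry out the standard ``variational = global minimizer'' argument. Let $\mu$ be as in the hypothesis and let $\sigma\in P(\R^d)$ be arbitrary; I may assume $\mathcal E_{\alpha,\beta}[\sigma]<\infty$ and (by an approximation/truncation argument, using that the kernel is bounded below on bounded sets and grows like $|z|^\alpha$) that $\sigma$ has compact support. Set $\nu=\sigma-\mu$, so $\nu(\R^d)=0$. Expanding,
\begin{equation*}
\mathcal E_{\alpha,\beta}[\sigma] = \mathcal E_{\alpha,\beta}[\mu] + \int_{\R^d}\phi_{\alpha,\beta}\,d\nu + \tfrac12\iint W_{\alpha,\beta}(x-y)\,d\nu(x)\,d\nu(y) \,.
\end{equation*}
The middle term is $\int \phi_{\alpha,\beta}\,d\sigma - \int\phi_{\alpha,\beta}\,d\mu = \int(\phi_{\alpha,\beta}-\eta)\,d\sigma - \int(\phi_{\alpha,\beta}-\eta)\,d\mu$; the first integral is $\geq 0$ since $\phi_{\alpha,\beta}\geq\eta$ everywhere, and the second vanishes since $\phi_{\alpha,\beta}=\eta$ on $\supp\mu$. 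The quadratic term is $\geq 0$ provided $\nu$ also has vanishing first moment. This last point needs care: for $\beta\leq 2<\alpha$ one in fact has the stronger statement that the form is $\geq 0$ already on measures with only $\nu(\R^d)=0$ (the repulsive part $-\beta^{-1}|z|^\beta$ is conditionally negative definite hence $+\beta^{-1}|z|^\beta$ times the sign is the ``good'' sign, and the attractive part with $\alpha\geq 2$ is handled by completing to the first-moment-free case), so I would organize the argument to avoid needing $\int x\,d\nu=0$; alternatively, one reduces to that case by translating $\sigma$, which is allowed since $\mathcal E_{\alpha,\beta}$ is translation invariant, so that $\int x\,d\sigma = \int x\,d\mu$. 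Either way one gets $\mathcal E_{\alpha,\beta}[\sigma]\geq\mathcal E_{\alpha,\beta}[\mu]$, hence $\mu$ is a minimizer, $E_{\alpha,\beta}=\mathcal E_{\alpha,\beta}[\mu]$, and testing the Euler--Lagrange identity against $\mu$ itself gives $2\mathcal E_{\alpha,\beta}[\mu]=\int\phi_{\alpha,\beta}\,d\mu=\eta$, i.e.\ $\eta=2E_{\alpha,\beta}$.

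Finally, for uniqueness when $(\alpha,\beta)\neq(4,2)$: suppose $\sigma$ is another minimizer; after a translation we may assume $\int x\,d\sigma=\int x\,d\mu$, so $\nu=\sigma-\mu$ satisfies both $\nu(\R^d)=0$ and $\int x\,d\nu=0$. The displayed identity with $\mathcal E_{\alpha,\beta}[\sigma]=\mathcal E_{\alpha,\beta}[\mu]$ forces $\int(\phi_{\alpha,\beta}-\eta)\,d\sigma + \tfrac12\iint W_{\alpha,\beta}\,d\nu\,d\nu=0$, and since both summands are $\geq 0$ they both vanish; the strictness in the (conditional) positive definiteness — valid precisely because $(\alpha,\beta)\neq(4,2)$, so at least one of the two ``degenerate'' configurations is excluded — yields $\nu=0$, i.e.\ $\sigma=\mu$ up to the translation we applied. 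I expect the main obstacle to be making the strict (conditional) positive/negative definiteness of $W_{\alpha,\beta}$ on the right annihilator space fully rigorous, in particular pinning down exactly when equality holds for the endpoint exponents $\beta\in\{0,2\}$ and $\alpha=4$ and checking that the only degenerate case one must exclude is $(\alpha,\beta)=(4,2)$; the compact-support reduction for competitor measures $\sigma$ is a routine but slightly technical second point.
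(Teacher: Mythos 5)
Your argument is essentially the paper's own (sketched) proof: conditional positive definiteness of the two kernel pieces on signed measures of total mass zero (resp.\ mass and first moment zero, for the $\alpha$-part), the bilinear expansion of $\mathcal E_{\alpha,\beta}[\sigma]$ around $\mu$, the Euler--Lagrange conditions killing the linear term, and strictness of at least one of the two quadratic forms for uniqueness when $(\alpha,\beta)\neq(4,2)$. One caveat: your parenthetical claim that for $\alpha>2$ the quadratic form of $\alpha^{-1}|z|^\alpha$ is already nonnegative on measures with only $\nu(\R^d)=0$ is false (e.g.\ $\nu=\delta_a-\delta_0$ gives $\iint|x-y|^\alpha\,d\nu(x)\,d\nu(y)=-2|a|^\alpha<0$), so the translation/equal-center-of-mass reduction that you offer as the ``alternative'' is in fact necessary --- it is precisely Lopes's observation invoked in the paper.
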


We only sketch the proof, as the details of the argument are similar as in the proof of \cite[Lemma 6]{Fr1}. The fundamental observation is the fact that the Fourier transform of $-\beta^{-1}|z|^{\beta}$ is positive for $-d<\beta<0$ and that its restriction to $\R^d\setminus\{0\}$ is positive for $0\leq\beta<2$. The behavior at the point $0$ is irrelevant since the object that is Fourier transformed is the difference of two probability measures, which has integral zero, and therefore Fourier transform vanishing at the origin. Moreover, we use the fact that the restriction to $\R^d\setminus\{0\}$ of the Fourier transform of $\alpha^{-1}|z|^\alpha$ is positive. As observed by Lopes \cite{Lo}, the behavior at the point $0$ is irrelevant since the argument is applied to the difference of two probability measures with equal center of mass; see \cite[Theorem 27]{CaDeDoFrHo} for an application of this argument. When $(\alpha,\beta)\neq (4,2)$, then at least one of the convexities is strict and we obtain the uniqueness (up to translations) of the minimizer.


\section{Proof of Theorems \ref{main1} and \ref{main}}

In this section we prove the main results stated in the introduction. We rely on Lemma \ref{riesz} and so only need to verify the conditions stated there are satisfied by the potential of our candidate minimizing measure. In the setting of Theorem \ref{main1} the condition that the potential is constant on the support of the candidate measure is trivially satisfied by radial symmetry. The nontrivial part of the proof is to show that potential is everywhere at least as big as on the support of the measure. This is the main technical work, which relies on our lemmas about hypergeometric functions.

We now present the details.

\begin{proof}[Proof of Theorem \ref{main1}.]
	We assume $d\geq2$, $2\leq\alpha\leq 4$, and $\beta_*(\alpha)\leq\beta\leq 2$ with $\alpha>\beta$. We only prove the theorem for $\beta\neq 0$, the case $\beta=0$ being similar; see Section \ref{Sec:Log Case} for more details. We break our proof into two steps: finding the radius of the sphere, then showing that the uniform measure on that sphere is indeed a minimizer.
	
	\medskip
	
	\emph{Step 1.} It is convenient to introduce the function
	\begin{equation}
		\label{eq:defpsigamma}
		\psi_\gamma(\rho) := 
		\begin{cases}
			\rho^{\frac\gamma2} F \left(-\frac\gamma2,\frac{2-\gamma-d}{2};\frac d2;\rho^{-1} \right) & \text{if}\ \rho>1 \,,\\
			F \left(-\frac\gamma2,\frac{2-\gamma-d}{2};\frac d2;\rho\right) & \text{if}\ \rho<1 \,.
		\end{cases}
	\end{equation}
	Under the assumption $d+\gamma>2$, it follows from Lemma \ref{hypercont}, together with some manipulations of gamma functions, that $\psi_\gamma$, which is originally defined only $[0,\infty)\setminus\{1\}$, extends to a continuously differentiable function on $[0,\infty)$. Moreover, Lemma \ref{hypercont} shows that
	\begin{equation}
		\label{eq:psivalues2}
		\psi_\gamma(1) = \frac{\Gamma(\frac d2)\,\Gamma(d+\gamma-1)}{\Gamma(\frac{d+\gamma}2)\,\Gamma(\frac{2d+\gamma-2}{2})}
		\qquad\text{and}\qquad
		\psi_\gamma'(1) = \frac\gamma2\, \frac{\Gamma(\frac d2)\,\Gamma(d+\gamma-2)}{\Gamma(\frac{d+\gamma-2}2)\,\Gamma(\frac{2d+\gamma-2}{2})} \,.
	\end{equation}
	We will apply this with $\gamma$ equal to $\alpha$ and $\beta$, for which the assumption $d+\gamma>2$ is satisfied.
	
	The functions $\psi_\gamma$ are relevant, since by Lemma \ref{pothyper2} and scaling, we can express the total potential of the measure $(|\Sph^{d-1}| R^{d-1})^{-1} \delta_{\partial B_R(0)}$ as
	\begin{align}\label{eq:totpotmain}
		\frac{1}{|\Sph^{d-1}|}\int_{\Sph^{d-1}} \left( \frac{|x-R\omega|^\alpha}{\alpha}  - \frac{|x-R\omega|^\beta}{\beta} \right) d\omega = \frac{R^{\alpha}}{\alpha}  \psi_\alpha \Big(\Big|\frac{x}{R} \Big|^2 \Big) -\frac{ R^{\beta}}{\beta} \psi_\beta \Big(\Big|\frac{x}{R} \Big|^2\Big) \,.
	\end{align}
	In particular, this total potential is radially symmetric and therefore constant on spheres. We want to choose the radius $R$ of the sphere in such a way that the total potential is minimal at $|x|=R$. The differentiability of $\psi_\gamma$ implies that the total potential is continuously differentiable with respect to $|x|$. Setting its derivative equal to zero, we arrive at the condition
	\begin{equation}
		\label{eq:ralpha2}
		\alpha^{-1} R^\alpha \psi_\alpha'(1) - \beta^{-1} R^\beta \psi_\beta'(1) = 0 \,.
	\end{equation}
	In view of the expression for $\psi_\gamma'(1)$ in \eqref{eq:psivalues2}, we see that this is satisfied if (and only if)
	$$
	R= \left( \frac{\alpha}{\beta} \frac{\psi_\beta'(1)}{\psi_\alpha'(1)} \right)^\frac1{\alpha-\beta}
	= \left( \frac{\Gamma(d+\beta-2)}{\Gamma(d+\alpha-2)} \, 
	\frac{\Gamma(\frac{d+\alpha-2}{2})\,\Gamma(\frac{2d+\alpha-2}{2})}{\Gamma(\frac{d+\beta-2}{2})\,\Gamma(\frac{2d+\beta-2}{2})} \right)^\frac{1}{\alpha-\beta} = R_{\alpha,\beta} \,.
	$$
	For the last equality we recall that $R_{\alpha,\beta}$ was defined in \eqref{eq:defr1}, and use the duplication formula for the gamma function.
	
	According to \eqref{eq:totpotmain}, the value of the total potential at $|x|=R_{\alpha,\beta}$ is equal to
	\begin{align*}
		|\Sph^{d-1}|^{-1} \int_{\Sph^{d-1}} \left( \alpha^{-1} |x-R_{\alpha,\beta}\omega|^\alpha - \beta^{-1}|x-R_{\alpha,\beta} \omega|^\beta \right) d\omega \Big|_{|x|=R_{\alpha,\beta}} = \eta
	\end{align*}
	with
	\begin{align*}
		\eta & := \alpha^{-1} R_{\alpha,\beta}^{\alpha} \psi_\alpha(1) - \beta^{-1} R_{\alpha,\beta}^{\beta} \psi_\beta(1)\\
		& \; = \left( \alpha^{-1} \psi_\alpha(1) - \beta^{-1} \psi_\beta(1) R_{\alpha,\beta}^{-\alpha+\beta} \right) R_{\alpha,\beta}^\alpha \\
		&\; = \alpha^{-1} \left( \psi_\alpha(1) - \psi_\beta(1) \frac{\psi_\alpha'(1)}{\psi_\beta'(1)} \right) R_{\alpha,\beta}^\alpha \\
		& \; = - \pi^{-\frac12} \, 2^{d+\alpha-2}\, \frac{\Gamma(\frac d2)\,\Gamma(\frac{d+\alpha-1}{2})}{\Gamma(\frac{2d+\alpha-2}{2})} \left( \frac1\beta - \frac1\alpha \right) R_{\alpha,\beta}^\alpha \,.
	\end{align*}
	Here we used \eqref{eq:ralpha2}, \eqref{eq:psivalues2}, and the duplication formula for the gamma function.
	
	To summarize our discussion so far, we have chosen the radius $R=R_{\alpha,\beta}$ in such a way that the potential has a critical point at $|x|=R_{\alpha,\beta}$. It still remains to be shown that the potential has a global minimum at $|x|=R_{\alpha,\beta}$. Once we have shown this, we can apply Lemma \ref{riesz} and deduce that $(|\Sph^{d-1}| R_{\alpha,\beta}^{d-1})^{-1}\delta_{R{_\alpha,\beta} \Sph^{d-1}}$ is a minimizer and we obtain the value $E_{\alpha,\beta}=\eta/2$ for the minimal energy, which coincides with the value stated in Theorem \ref{main}. If $(\alpha,\beta)\neq(4,2)$, then the lemma also implies that the minimizer is unique up to translations.
	
	\medskip
	
	\emph{Step 2.} 	
	In view of \eqref{eq:totpotmain}, the total potential having a global minimum at $|x|=R_{\alpha,\beta}$ is equivalent to the inequality
	\begin{align}\label{eq:ineqmain1}
		\frac{ R_{\alpha,\beta}^{\alpha}}{\alpha} \psi_\alpha \Big( \Big|\frac{x}{R_{\alpha,\beta}} \Big|^2 \Big) - \frac{ R_{\alpha,\beta}^{\beta}}{\beta} \psi_\beta\Big( \Big|\frac{x}{R_{\alpha,\beta}} \Big|^2 \Big) \geq \frac{ R_{\alpha,\beta}^{\alpha}}{\alpha} \psi_\alpha(1) - \frac{ R_{\alpha,\beta}^{\beta}}{\beta} \psi_\beta(1) \qquad\text{for all}\ x\in\R^d \,.
	\end{align}
	This can be simplified to
	$$
	\alpha^{-1} R_{\alpha,\beta}^{\alpha-\beta} \psi_\alpha(\rho) - \beta^{-1} \psi_\beta(\rho) \geq  \alpha^{-1} R_{\alpha,\beta}^{\alpha-\beta} \psi_\alpha(1) - \beta^{-1} \psi_\beta(1)
	\qquad\text{for all}\ \rho>0 \,.
	$$	
	Using \eqref{eq:ralpha2} to relate $R_{\alpha,\beta}$ and $\psi_\beta'(1)/\psi_\alpha'(1)$, it becomes
	$$
	\beta^{-1} \frac{\psi_\beta'(1)}{\psi_\alpha'(1)}\, \psi_\alpha(\rho) - \beta^{-1} \psi_\beta(\rho) \geq \beta^{-1} \frac{\psi_\beta'(1)}{\psi_\alpha'(1)}\, \psi_\alpha(1) - \beta^{-1} \psi_\beta(1)
	\qquad\text{for all}\ \rho>0 \,.
	$$	
	To prove this inequality, it suffices to show that the function 
	\begin{equation}
		\label{eq:goalconvex}
		\rho\mapsto \beta^{-1} \frac{\psi_\beta'(1)}{\psi_\alpha'(1)}\, \psi_\alpha(\rho) - \beta^{-1} \psi_\beta(\rho)
	\end{equation}
	is convex on $[0,\infty)$. This is the content of Proposition \ref{psiconvex} in the next section. Since the function is differentiable at $\rho=1$ and has vanishing derivative there, this will prove the desired inequality and complete the proof of Theorem \ref{main1}.
\end{proof}

We now proceed to the proof of Theorem \ref{main}. The overall strategy is the same as for Theorem \ref{main1}, and we briefly outline it before proceeding. We note that on the support of the candidate minimizer,  the $\beta$-part of the potential (given by \eqref{eq:pothyper1} with $\gamma=\beta$)  is a hypergeometric function whose second index is $-1$, which means that the hypergeometric function is an affine-linear function. In other words, the $\beta$-potential inside the support is of the form $b_1 + b_2 |x|^2$. We then show that the quadratic, i.e. $\alpha=2$, part of the potential is of the same form. By an appropriate scaling, one can arrange that the coefficients of $|x|^2$ cancel each other, so that the total potential is constant on the support of the candidate minimizer. It remains to show that the potential is bounded from below by this constant. This is again the main technical work, which we deduce from our lemmas about hypergeometric functions.

\begin{proof}[Proof of Theorem \ref{main}]
	 We will prove the theorem for $\beta\neq 0$, the case $\beta=0$ being similar; see Section \ref{Sec:Log Case} for details. Recall that $\alpha = 2$ and $-d<\beta<\min\{-d+4,2\}$.
	
	We first need an expression for the total potential. We denote the $\beta$-part of the potential by
	\begin{equation}
		\label{eq:defphimain}
		\Phi(x) = - \beta^{-1} \int_{|y|<1} |x-y|^\beta (1-|y|^2)^{\frac{2-\beta-d}{2}}\,dy
	\end{equation}
	and recall that we have obtained an expression for this in Lemma \ref{pothyper1}. To compute the $\alpha$-part of the potential, we recall the definition \eqref{eq:defca} of $C_\beta$. Expressing a beta function in terms of gamma functions, we find
	\begin{align} \label{eq:defcacomp}
		\int_{|y|<1} (1-|y|^2)^\frac{2-\beta-d}{2}\,dy & = |\Sph^{d-1}| \int_0^1 (1-r^2)^{\frac{2-\beta-d}{2}} r^{d-1}\,dr  \notag \\
		&  = 2^{-1} |\Sph^{d-1}| \int_0^1 (1-t)^{\frac{2-\beta-d}{2}} t^{\frac{d-2}2}\,dt \notag \\
		& = 2^{-1} |\Sph^{d-1}| \, \frac{\Gamma(\frac d2)\,\Gamma(\frac{4-\beta-d}{2})}{\Gamma(\frac{4-\beta}{2})} = \pi^\frac d2\, \frac{\Gamma(\frac{4-\beta-d}{2})}{\Gamma(\frac{4-\beta}{2})}  = C_\beta 
	\end{align}
	and, similarly,
	$$
	\int_{|y|<1} |y|^2 (1-|y|^2)^\frac{2-\beta-d}{2}\,dy = \pi^\frac d2\, \frac d2\ \frac{\Gamma(\frac{4-\beta-d}{2})}{\Gamma(\frac{6-\beta}{2})} = \frac{ d}{4-\beta} \, C_\beta \,.
	$$
	Thus,
	\begin{equation}\label{eq:Quadratic part of potential}
	2^{-1} \int_{|y|<1} |x-y|^2(1-|y|^2)^{\frac{2-\beta-d}{2}}\,dy = 2^{-1} C_\beta \left( |x|^2 + \frac{d}{4-\beta} \right).
	\end{equation}
	
	It follows by scaling that
	\begin{equation*}
	\int_{|y|<R}\!\! \left(\frac{|x-y|^2}{2}  - \frac{|x-y|^\beta}{\beta} \right) (R^2-|y|^2)^\frac{2-\beta-d}{2}dy  = R^2 \Phi\Big(\frac{x}{R}\Big) + \frac{ C_\beta}{2} R^{4-\beta} \left( \Big|\frac{x}{R}\Big|^2 \! + \! \frac{d}{4-\beta} \right)\!.
	\end{equation*}
	Since $ F \left(a,-1;c;z\right)= 1- (a/c) z$, it follows from Lemma \ref{pothyper1} that
	\begin{align*}
		\Phi(x) & = - \beta^{-1} \pi^{\frac d2} \, \frac{\Gamma(\frac{4-\beta-d}{2})\,\Gamma(\frac{\beta+d}2)}{\Gamma(\frac d2)} \left( 1 + \frac{\beta}{d}\,|x|^2\right) \\
		& = - 2^{-1} C_\beta R_{2,\beta}^{2-\beta} \left( \frac{d}{\beta} + |x|^2 \right)
		\qquad\qquad\qquad\text{if}\ |x|<1 \,.
	\end{align*}
	Here we have used the definition of $R_{2,\beta}$ from \eqref{eq:defr2}. We now see that with the choice $R=R_{2,\beta}$ the coefficient in front of $|x|^2$ vanishes and we have
	\begin{align*}
		& \int_{|y|<R_{2,\beta}} \left( \frac{|x-y|^2}2 - \frac{|x-y|^\beta}\beta \right) (R_{2,\beta}^2-|y|^2)^\frac{2-\beta-d}{2}\,dy \\
		& = - 2^{-1} C_\beta R_{2,\beta}^{4-\beta} \frac{d}{\beta} + 2^{-1} \frac{d}{4-\beta}  C_\beta R_{2,\beta}^{4-\beta} 
		= - C_\beta R_{2,\beta}^{4-\beta} \frac{d(2-\beta)}{\beta(4-\beta)}
		\qquad\text{if}\ |x|<R_{2,\beta} \,.
	\end{align*}
	Therefore $d\mu(x) = C_\beta^{-1} R_{2,\beta}^{-2+\beta} (R_{2,\beta}^2-|x|^2)^\frac{2-\beta-d}{2} \mathbbm{1}_{B_{R_{2, \beta}}}(x)\,dx$ is a probability measure that satisfies the second condition in Lemma \ref{riesz} with
	$$
	\eta = - R_{2,\beta}^2 \frac{d(2-\beta)}{\beta(4-\beta)} \,.
	$$
	The task is now to show that the first condition in the lemma is satisfied as well, namely that the total potential of $\mu$ is at least $\eta$ for $|x|\geq R_{2,\beta}$. Once this is shown, we infer from the lemma that $\mu$ is the unique (up to translations) minimizer and we obtain the value for the minimal energy stated in Theorem \ref{main}.
	
	At this point it is convenient to introduce the function
	$$
	\psi(\rho) := 
	\begin{cases}
		\frac{\Gamma(\frac d2)}{\Gamma(2-\frac\beta2) \, \Gamma(\frac{\beta+d}{2})} \, \rho^{\frac\beta2}  F \left(-\frac\beta2,\frac{2-\beta-d}{2};2-\frac\beta2;\rho^{-1} \right) & \text{if}\ \rho>1 \,,\\
		  F \left(-\frac\beta2,-1;\frac d2;\rho \right) & \text{if}\ \rho<1 \,.
	\end{cases}
	$$
	Using \eqref{eq:pothyper1}, we may rewrite $\Phi$ in terms of $\psi$:
	$$
	\Phi(x) = -\beta^{-1} \pi^{\frac d2} \, \frac{\Gamma(\frac{4-\beta-d}{2})\,\Gamma(\frac{\beta+d}2)}{\Gamma(\frac d2)} \, \psi(|x|^2) \,.
	$$
	Using some straightforward manipulations with gamma functions, we deduce from Lemma \ref{hypercont} that $\psi$, which is originally defined only in $[0,\infty)\setminus\{1\}$, extends to a continuously differentiable function on $[0,\infty)$. This continuous differentiability, together with the fact that $\psi$ is affine-linear in $(0,1)$, implies that $ F \left(-\frac\beta2,-1;\frac d2;\rho \right) = \psi(1) + \psi'(1)(\rho-1)$. This allows us to write, for all $x\in\R^d$,
	\begin{align*}
		& \int_{|y|<R_{2,\beta}} \left( \frac{|x-y|^2}2 - \frac{|x-y|^\beta}\beta \right) (R_{2,\beta}^2-|y|^2)^\frac{2-\beta-d}{2}\,dy \\
		& = - C_\beta R_{2,\beta}^{4-\beta} \frac{d(2-\beta)}{\beta(4-\beta)} \\
		& \quad - \beta^{-1} \pi^{\frac d2} \, \frac{\Gamma(\frac{4-\beta-d}{2})\,\Gamma(\frac{\beta+d}2)}{\Gamma(\frac d2)} 
		\left( \psi \Big( \Big| \frac x{R_{2,\beta}}\Big|^2\Big) - \psi(1) - \psi'(1) \Big(\Big| \frac x{R_{2,\beta}}\Big|^2 - 1 \Big) \right).
	\end{align*}
	The desired inequality for the total potential is therefore equivalent to the inequality
	$$
	-\beta^{-1} \left( \psi(\rho) - \psi(1) - \psi'(1)(\rho-1) \right) \geq 0
	\qquad\text{for all}\ \rho\geq 1 \,.
	$$
	This inequality is a consequence of the convexity of $\psi$ for $\beta<0$ and its concavity for $\beta\in(0,1)$, which we have shown in Corollary \ref{hypercor}. This completes the proof of Theorem~\ref{main}.
\end{proof}


\section{Completion of the proof of Theorem \ref{main1}}

The proof of Theorem \ref{main1} in the previous section relied on the convexity of a certain function, which we will justify in this section.

Throughout this section, we assume that $d\geq 2$, $2\leq\alpha\leq 4$, $\beta_*(\alpha)\leq\beta\leq 2$ and $\beta<\alpha$. We introduce the function 
\begin{equation}
	\label{eq:goalconvexapp}
	\Psi(\rho) := \beta^{-1} \frac{\psi_\beta'(1)}{\psi_\alpha'(1)}\, \psi_\alpha(\rho) - \beta^{-1} \psi_\beta(\rho) \,,
\end{equation}
where $\psi_\alpha$ and $\psi_\beta$ are defined by \eqref{eq:defpsigamma}. To lighten the notation, we do not reflect the dependence of $\Psi$ on $d$, $\alpha$ and $\beta$ in the notation. The main result of this section is the following proposition, which completes the proof of Theorem \ref{main1}.

\begin{proposition}\label{psiconvex}
	The function $\Psi$ is convex on $[0,\infty)$.
\end{proposition}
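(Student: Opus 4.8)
The plan is to split the interval $[0,\infty)$ at $\rho=1$ and prove convexity of $\Psi$ separately on $[0,1]$ and on $[1,\infty)$; since $\Psi$ is $C^1$ at $\rho=1$ (by the remarks following \eqref{eq:defpsigamma}), piecewise convexity on the two closed intervals sharing the endpoint $\rho=1$ yields convexity on all of $[0,\infty)$. On each piece we must compute $\Psi''$. Writing $a_\gamma=-\gamma/2$, $b_\gamma=(2-\gamma-d)/2$, $c=d/2$, and abbreviating $\lambda := \beta^{-1}\psi_\beta'(1)/\psi_\alpha'(1)$, we have on $\rho<1$
$$
\Psi''(\rho) = \lambda\,\frac{a_\alpha(a_\alpha+1)b_\alpha(b_\alpha+1)}{c(c+1)}\, F\!\left(a_\alpha+2,b_\alpha+2;c+2;\rho\right) - \beta^{-1}\,\frac{a_\beta(a_\beta+1)b_\beta(b_\beta+1)}{c(c+1)}\, F\!\left(a_\beta+2,b_\beta+2;c+2;\rho\right),
$$
using \eqref{eq:hyperder} twice, and an analogous expression on $\rho>1$ after applying \eqref{eq:hyperdermod} twice to the $\rho^{\gamma/2}F(\cdots;\rho^{-1})$ branch. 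The first step is therefore to record these two formulas and to determine the signs of $\lambda$, of $\beta^{-1}$, and of the quantities $a_\gamma(a_\gamma+1)b_\gamma(b_\gamma+1)$ (and $a_\gamma(a_\gamma+1)$ for the outer branch) in the full parameter range $d\ge2$, $2\le\alpha\le4$, $\beta_*(\alpha)\le\beta\le2$, $\beta<\alpha$; this is bookkeeping with the explicit definition of $R_{\alpha,\beta}$ and the gamma-function values in \eqref{eq:psivalues2}.

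In the regime where the two terms in $\Psi''$ have the same favorable sign — e.g. where $\lambda\,a_\alpha(a_\alpha+1)b_\alpha(b_\alpha+1)\ge0$ and $-\beta^{-1}a_\beta(a_\beta+1)b_\beta(b_\beta+1)\ge0$ (and correspondingly for the outer branch) — Corollary \ref{hypercor} immediately gives $F\ge0$ for each hypergeometric factor (one checks $c+2\ge\max\{a_\gamma+2,b_\gamma+2\}$, i.e. $d\ge\gamma$, which holds since $\gamma\le\alpha\le4\le d$ fails only for small $d$ — so here one must be careful and instead verify $c+2\geq a_\gamma+2$ and $c+2\geq b_\gamma+2$ directly from $d\ge2$ and the constraints on $\gamma$), and convexity follows termwise. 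The genuinely delicate regime is where the two terms have opposite signs, so that $\Psi''$ is a difference of two nonnegative hypergeometric functions; this is exactly what Lemma \ref{lem:Hypergeom intersect once} is built for. There one rewrites $\Psi''(\rho)=K\bigl(F(a_1,b_1;c+2;\rho)-qF(a_2,b_2;c+2;\rho)\bigr)$ with $K>0$, checks the hypotheses $0<a_2<a_1$, $0<b_2<b_1$, $c+2>a_1+b_1$ (again using $d\ge2$ and the range of $\alpha,\beta$, in particular that $\beta\ge\beta_*(\alpha)$ forces the relevant index inequalities), concludes that $\Psi''$ changes sign at most once on $[0,1]$, from $-$ to $+$. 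To upgrade "$\Psi''$ changes sign at most once" to "$\Psi$ is convex" one needs $\Psi''\ge0$ throughout, which requires checking the sign of $\Psi''$ at the two endpoints: at $\rho=0$ one evaluates the hypergeometric functions at $0$ (they equal $1$) and at $\rho=1$ one uses \eqref{eq:hyperoneder2} / \eqref{eq:hyperoneder2mod}; if $\Psi''(0)\ge0$ then a single sign change from $-$ to $+$ is impossible and $\Psi''\ge0$ on $[0,1]$, and similarly one handles the outer branch using the limiting value $\Psi''(\rho)\to0$ as $\rho\to\infty$ together with $\Psi''(1)\ge0$.

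The main obstacle I expect is precisely pinning down $\beta_*(\alpha)$ as the sharp threshold: the precise algebraic form of $\beta_*(\alpha)$ should emerge exactly from the condition that makes $\Psi''(1)\ge0$ (equivalently, from matching the second-derivative values \eqref{eq:psivalues2}-type formulas for $\psi_\alpha$ and $\psi_\beta$ at $\rho=1$ with the weight $\lambda$), and verifying this will require a somewhat intricate manipulation of products of gamma functions and the duplication formula. A secondary obstacle is the case analysis: the signs of $a_\gamma(a_\gamma+1)$, $b_\gamma(b_\gamma+1)$ and of $\lambda$ partition the parameter rectangle into several subregions (the boundary being where one of $\alpha,\beta$ crosses the values making an index land in $(-1,0)$, and where $\beta$ crosses $0$ and $2$), and one must treat each subregion with either the "termwise" Corollary \ref{hypercor} argument or the "difference of two" Lemma \ref{lem:Hypergeom intersect once} argument, and then confirm these subregions exhaust the hypothesis of the Proposition. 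The case $(\alpha,\beta)=(4,2)$ and the degenerate sub-cases where one of the two hypergeometric terms vanishes identically (e.g. an index equal to $0$ or $-1$) should be noted separately but are easy.
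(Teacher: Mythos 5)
Your overall architecture matches the paper's: split at $\rho=1$, reduce to the sign of $\Psi''$, use termwise positivity (Lemma \ref{hyperpos}/Corollary \ref{hypercor}) where the two terms of $\Psi''$ have favorable signs, invoke Lemma \ref{lem:Hypergeom intersect once} otherwise, and extract $\beta_*(\alpha)$ from the condition $\Psi''(1)\geq 0$. The last point is exactly right and is Step~1 of the paper's Lemma \ref{convexsmall}. However, there is a genuine gap in your case analysis on $[0,1]$ in the regime $\beta<-d+4$: there the two terms of $\Psi''(\rho)=\mathcal C_1 F(2-\tfrac\alpha2,\tfrac{6-\alpha-d}{2};\tfrac{d+4}{2};\rho)-\mathcal C_2 F(2-\tfrac\beta2,\tfrac{6-\beta-d}{2};\tfrac{d+4}{2};\rho)$ do have opposite signs, but Lemma \ref{lem:Hypergeom intersect once} requires \emph{all four} upper indices to be positive, and $\tfrac{6-\alpha-d}{2}>0$ fails whenever $d\geq 4$, as well as for $d=3,\ \alpha\geq3$ and $d=2,\ \alpha=4$. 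So for most dimensions neither of your two tools applies, and a third argument is needed. The paper supplies it by differentiating once more: in those cases $\psi_\alpha'$ is concave and $\beta^{-1}\psi_\beta'$ is convex on $(0,1)$ (Corollary \ref{hypercor} applied to the \emph{first} derivatives), so $\Psi'''\leq 0$, i.e.\ $\Psi''$ is nonincreasing on $(0,1)$, which together with $\Psi''(1)\geq0$ gives $\Psi''\geq0$ on $[0,1]$. Lemma \ref{lem:Hypergeom intersect once} is then reserved precisely for the leftover low-dimensional cases $d=2,\ \alpha<4$ and $d=3,\ \alpha<3$.

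Two smaller corrections. First, when Lemma \ref{lem:Hypergeom intersect once} does apply, the term being \emph{subtracted} is the one with the larger indices (those coming from $\beta$), so $\Psi''$ changes sign from $+$ to $-$, not from $-$ to $+$; the endpoint you must check is therefore $\rho=1$ (where $\Psi''(1)\geq0$ kills the sign change), and checking $\Psi''(0)\geq0$ as you propose would not close the argument. Second, on the outer branch $(1,\infty)$ the second derivative has the form $\lambda\,\tfrac\alpha2(\tfrac\alpha2-1)\rho^{\frac\alpha2-2}F(\cdot;\rho^{-1})+\tfrac12(1-\tfrac\beta2)\rho^{\frac\beta2-2}F(\cdot;\rho^{-1})$, so both coefficients are nonnegative and the argument is always termwise; Lemma \ref{lem:Hypergeom intersect once} cannot be used there anyway because of the extraneous powers of $\rho$. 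The only subtlety is that for $\beta<4-d$ the hypothesis $c\geq\max\{a,b\}$ of Lemma \ref{hyperpos} fails for the $\beta$-factor, and one must instead argue via its derivative (whose parameters satisfy $c>b>0$, so the integral representation applies) that it is nonincreasing in $z=\rho^{-1}$, and then use $\Psi''(1)\geq0$ to anchor the sign; your "check $c+2\geq a_\gamma+2$ directly" step would not go through as written in that range.
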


For the proof of this proposition we will have to distinguish various cases. We begin with the easiest one.

\begin{proof}[Proof of Proposition \ref{psiconvex} for $\beta\geq-d+4$]
	It follows from Corollary \ref{hypercor} that $\beta^{-1}\psi_\beta$ is concave and $\psi_\alpha$ is convex on each one of the two intervals $(0,1)$ and $(1,\infty)$ (note that the inequality $\beta\geq-d+4$ guarantees the assumption $c\geq a+2$ of the corollary.) Since the derivatives of both functions extend continuously to the point $\rho=1$, we deduce that both functions have the respective concavity and convexity properties on all of $(0,\infty)$. Moreover, since, according to \eqref{eq:psivalues2}, $\beta^{-1}\psi_\beta'(1)\geq 0$ and $\psi_\alpha'(1)>0$, we obtain the claimed convexity of the function \eqref{eq:goalconvex}. This concludes the proof of the proposition for $\beta\geq -d+4$.
\end{proof}

In the remainder of this section, we will prove Proposition \ref{psiconvex} for $\beta<-d+4$. As in the preceding proof, we will show separately the convexity on $[0,1]$ and on $[1,\infty)$. This is the respective content of the following two lemmas.

\begin{lemma}\label{convexsmall}
	Let $d\geq 2$, $2\leq\alpha\leq 4$ and $\beta_*(\alpha)\leq\beta<-d+4$. Then $\Psi$ is convex on $[0,1]$.
\end{lemma}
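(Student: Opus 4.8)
\textbf{Plan for the proof of Lemma \ref{convexsmall}.}
On $[0,1]$ we have $\Psi(\rho) = \beta^{-1}\tfrac{\psi_\beta'(1)}{\psi_\alpha'(1)} F\!\left(-\tfrac\alpha2,\tfrac{2-\alpha-d}{2};\tfrac d2;\rho\right) - \beta^{-1} F\!\left(-\tfrac\beta2,\tfrac{2-\beta-d}{2};\tfrac d2;\rho\right)$. Differentiating twice and using \eqref{eq:hyperder}, both $F''$ terms become hypergeometric functions with parameters $c = \tfrac d2 + 2$ and first index $-\tfrac\gamma2+2$, second index $\tfrac{2-\gamma-d}{2}+2 = \tfrac{6-\gamma-d}{2}$, for $\gamma\in\{\alpha,\beta\}$. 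Thus $\Psi''(\rho) = A\cdot\beta^{-1}\tfrac{\psi_\beta'(1)}{\psi_\alpha'(1)} F\!\left(2-\tfrac\alpha2,\tfrac{6-\alpha-d}{2};\tfrac d2+2;\rho\right) - B\cdot\beta^{-1} F\!\left(2-\tfrac\beta2,\tfrac{6-\beta-d}{2};\tfrac d2+2;\rho\right)$, where $A,B>0$ are the explicit constants coming from \eqref{eq:hyperder}. The first step of the proof is to record the signs of all the coefficients: by \eqref{eq:psivalues2}, $\beta^{-1}\psi_\beta'(1)\ge0$ (since $\beta<0$ forces $\psi_\beta'(1)\le0$ in the relevant regime, or one checks the gamma-function signs directly), $\psi_\alpha'(1)>0$, and the constant $-\beta^{-1}>0$; I would also flag that the case $\beta\ge -d+4$ is already done, so here $\beta<-d+4$, which controls where the indices $\tfrac{6-\gamma-d}{2}$ sit. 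This is essentially Step 1 alluded to in remark (d).

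The core of the argument is to rewrite $\beta^{-1}\tfrac{\psi_\beta'(1)}{\psi_\alpha'(1)}\Psi'' $ (or $\Psi''$ itself) as $-\beta^{-1}$ times a difference of the form $F(a_1,b_1;c;\rho) - q\,F(a_2,b_2;c;\rho)$ with common $c = \tfrac d2+2$, and then invoke Lemma \ref{lem:Hypergeom intersect once}. For this I need $0<a_2<a_1$, $0<b_2<b_1$, $q>0$, and $c>a_1+b_1$. Matching: the first indices are $2-\tfrac\gamma2$, which are positive and ordered because $\beta<\alpha$ (so $2-\tfrac\beta2 > 2-\tfrac\alpha2 > 0$ given $\alpha\le 4$); hence $a_1 = 2-\tfrac\beta2$, $a_2 = 2-\tfrac\alpha2$. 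The second indices are $\tfrac{6-\gamma-d}{2}$; positivity and ordering require $6-\beta-d>0$ (this is exactly $\beta<-d+6$, implied by $\beta<-d+4$) and $6-\beta-d>6-\alpha-d$, again from $\beta<\alpha$. The constant $q$ is $\tfrac{A\beta^{-1}\psi_\beta'(1)/\psi_\alpha'(1)}{B}$ or its reciprocal depending on orientation; I need to check it is positive, which follows from the sign tally above, and I must be careful to set things up so the ``$F(a_1,b_1;\cdot)$'' term is the one that dominates for $\rho$ near $1$ — equivalently I orient the difference so that Lemma \ref{lem:Hypergeom intersect once} gives negativity near $0$ and positivity near $1$, OR the reverse, whichever makes $\Psi''\ge0$ on $[0,1]$. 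The condition $c>a_1+b_1$, i.e. $\tfrac d2+2 > 2-\tfrac\beta2 + \tfrac{6-\beta-d}{2} = 5 - \beta - \tfrac d2$, simplifies to $d + \beta > 3$, i.e. $\beta > -d+3$; and here is where the hypothesis $\beta\ge\beta_*(\alpha)$ must do real work, since $\beta_*(\alpha) > -d+3$ — I expect one actually needs the sharper consequence of $\beta\ge\beta_*(\alpha)$ to pin down the sign of $\Psi''$ at the endpoint $\rho=1$ (or at $\rho=0$), not just the inequality $c>a_1+b_1$.

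Concretely, the endpoint analysis is what turns ``at most one sign change, negative then positive'' into ``$\Psi''\ge 0$ throughout $[0,1]$''. By Lemma \ref{lem:Hypergeom intersect once}, the relevant difference changes sign at most once on $[0,1]$, from $-$ to $+$. I will evaluate $\Psi''(1)$ using \eqref{eq:hyperoneder2} (the closed form of $F''$ at $z=1$, valid since $c-a-b = \tfrac d2+2 - (2-\tfrac\gamma2) - \tfrac{6-\gamma-d}{2} = d+\gamma-3 > 0$ exactly when $\gamma > -d+3$), combine the two gamma-function expressions, and show $\Psi''(1)\ge 0$ — this is precisely the local-minimality-of-the-sphere computation and it is where $\beta\ge\beta_*(\alpha)$ enters as an equality-case threshold ($\Psi''(1)=0$ when $\beta=\beta_*(\alpha)$). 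If $\Psi''(1)\ge0$ and $\Psi''$ has at most one sign change on $[0,1]$ going from negative to positive, then necessarily $\Psi''\ge0$ on all of $[0,1]$ (a negative value anywhere would force the lone sign change, leaving $\Psi''(1)\ge0$ consistent, but then $\Psi''<0$ only on an initial segment $[0,z_0)$ — so I also need $\Psi''(0)\ge0$, i.e. check the constant term $A\beta^{-1}\psi_\beta'(1)/\psi_\alpha'(1) - B\beta^{-1} \ge 0$, which is another explicit gamma inequality). So the logical skeleton is: (i) no sign change of the wrong type by Lemma \ref{lem:Hypergeom intersect once}; (ii) $\Psi''(0)\ge0$ by a direct gamma computation; (iii) $\Psi''(1)\ge0$ by \eqref{eq:hyperoneder2} and the hypothesis $\beta\ge\beta_*(\alpha)$; (iv) conclude $\Psi''\ge0$ on $[0,1]$, hence $\Psi$ convex there. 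The main obstacle I anticipate is step (iii): verifying $\Psi''(1)\ge0$ reduces, after clearing gamma factors via the duplication formula, to showing that a specific rational expression in $d,\alpha,\beta$ is nonnegative precisely on $\{\beta\ge\beta_*(\alpha)\}$, and reverse-engineering $\beta_*(\alpha)$ from that inequality — confirming it factors as claimed — is the delicate bookkeeping; a secondary subtlety is making sure the hypotheses of Lemma \ref{lem:Hypergeom intersect once} ($c>a_1+b_1$ strictly) hold on the whole parameter range, which may require treating the boundary $\beta = \beta_*(\alpha)$ or small $d$ by a separate continuity or limiting argument.
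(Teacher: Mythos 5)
Your outline of the endpoint computation is exactly the paper's Step 1: $\Psi''(1)\geq 0$ follows from the closed forms of $\psi_\gamma''(1)$, and the threshold $\beta\geq\beta_*(\alpha)$ is precisely the condition under which the resulting rational expression in $d,\alpha,\beta$ is nonpositive (one root of the quadratic in $\beta$ is $\beta=\alpha$, which makes the factorization painless). Your second-derivative decomposition is also correct, with common third parameter $c=\tfrac{d+4}{2}$ and $c-a_1-b_1=d+\beta-3>0$ guaranteed by $\beta\geq\beta_*(\alpha)>-d+3$. The orientation worry is harmless: the dominant term $F(2-\tfrac\beta2,\tfrac{6-\beta-d}{2};c;\rho)$ carries a \emph{negative} coefficient, so $\Psi''$ goes positive-then-negative and only the check at $\rho=1$ is needed, not the one at $\rho=0$.

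The genuine gap is that you verify $b_1=\tfrac{6-\beta-d}{2}>0$ and $b_1>b_2$ but never $b_2=\tfrac{6-\alpha-d}{2}>0$, which Lemma \ref{lem:Hypergeom intersect once} requires. That condition reads $\alpha+d<6$ and fails on most of the parameter range: for all $d\geq 4$ (since $\alpha\geq2$), for $d=3$ with $\alpha\geq3$, and for $d=2$ with $\alpha=4$. The failure is not cosmetic --- the proof of Lemma \ref{lem:Hypergeom intersect once} needs $(b_2)_n>0$ and the monotonicity of $\tfrac{(a_2+n)(b_2+n)}{(a_1+n)(b_1+n)}$, both of which break when $b_2\leq0$. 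This is exactly why the paper splits the proof: Lemma \ref{lem:Hypergeom intersect once} is used only for $d=2$, $2\leq\alpha<4$ and $d=3$, $2\leq\alpha<3$, while the complementary cases are handled by a different mechanism, namely showing $\Psi'''\leq0$ on $(0,1)$ (via Corollary \ref{hypercor}, using that $\psi_\alpha'$ is concave and $\beta^{-1}\psi_\beta'$ is convex there, which is where the sign of $(6-\alpha-d)(4-\alpha-d)$ enters with the \emph{opposite} requirement) and then integrating back from the endpoint bound $\Psi''(1)\geq0$. Without some such second argument your plan does not cover $d\geq4$ at all.
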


\begin{proof}
	\emph{Step 1.} We show that $\Psi$ is twice continuously differentiable on $[0,\infty)$ and that $\Psi''(1)\geq 0$ (we note that the twice differentiability on $(1,\infty)$ and the continuity of the second derivative at $1$ will not be needed in our argument.)
		
	Under the assumption $d+\gamma>3$, it follows from Lemma \ref{hypercont}, together with some manipulations of gamma functions, that $\psi_\gamma$ is twice continuously differentiable on $[0,\infty)$ and that
\begin{equation}
	\label{eq:psivalues3}
	\psi_\gamma''(1) = \frac\gamma2(\frac\gamma2-1) \, \frac{\Gamma(\frac d2)\,\Gamma(d+\gamma-3)}{\Gamma(\frac{d+\gamma-4}2)\,\Gamma(\frac{2d+\gamma-2}{2})} \,.
\end{equation}
Since $\beta_*(\alpha)>-d+3$ for $d\geq 2$ and $2\leq\alpha\leq 4$, the assumption $d+\gamma>3$ in the lemma is satisfied for both $\gamma=\alpha$ and $\gamma=\beta$. It follows that $\Psi$ is twice continuously differentiable on $[0,\infty)$ with
\begin{align*}
	\Psi''(1) & = \beta^{-1} \frac{\psi_\beta'(1)}{\psi_\alpha'(1)}\, \psi_\alpha''(1) - \beta^{-1} \psi_\beta''(1) \\
	& = \frac12 (\frac\alpha2-1) \,
	\frac{ \Gamma(d+\beta-2)\, \Gamma(\frac{d+\alpha-2}2)} 
	{ \Gamma(d+\alpha-2)\,\Gamma(\frac{d+\beta-2}2)\,\Gamma(\frac{2d+\beta-2}{2})}
	\, \frac{\Gamma(\frac d2)\,\Gamma(d+\alpha-3)}{\Gamma(\frac{d+\alpha-4}2)} \\
	& \quad 
	- \frac12 (\frac\beta2-1) \, \frac{\Gamma(\frac d2)\,\Gamma(d+\beta-3)}{\Gamma(\frac{d+\beta-4}2)\,\Gamma(\frac{2d+\beta-2}{2})} \\
	& = \frac12 \frac{\Gamma(\frac d2)\,\Gamma(d+\beta-3)}{\Gamma(\frac{d+\beta-4}{2})\,\Gamma(\frac{2d+\beta-2}{2})}
	\left(
	(\frac\alpha2-1) \frac{(d+\beta-3)\, \frac{d+\alpha-4}{2}}{(d+\alpha-3)\, \frac{d+\beta-4}{2}}
	- (\frac\beta2-1) \right).
\end{align*}
Since $-d+3<\beta<-d+4$, we have
$$
\frac12 \frac{\Gamma(\frac d2)\,\Gamma(d+\beta-3)}{\Gamma(\frac{d+\beta-4}{2})\,\Gamma(\frac{2d+\beta-2}{2})}<0 \,.
$$
Meanwhile, the condition $\beta\geq\beta_*(\alpha) = \frac{-10+3\alpha+7d+\alpha d-d^2}{d+\alpha-3}$ means
$$
(\frac\alpha2-1) \frac{(d+\beta-3)\, \frac{d+\alpha-4}{2}}{(d+\alpha-3)\, \frac{d+\beta-4}{2}}
- (\frac\beta2-1) \leq 0 \,.
$$
This can be seen by setting the expression on the left side equal to zero and solving the resulting quadratic equation for $\beta$. The number of arithmetic manipulations can be reduced by noting that $\beta=\alpha$ is one of the solutions of the quadratic equation. Factoring out $\beta-\alpha$ one arrives at a first order equation for $\beta$. Thus, at the end, this argument shows that $\Psi''(1)\geq 0$, as claimed.

\medskip

\emph{Step 2.} We now assume that $\alpha=4$ if $d=2$, $3\leq\alpha\leq 4$ if $d=3$ and $2\leq\alpha\leq 4$ if $d\geq 4$ and show that $\Psi'''\leq 0$ on $(0,1)$.

Note that together with the information $\Psi''(1)\geq 0$ from Step 1, this implies that $\Psi''\geq 0$ on $[0,1]$. This proves the lemma under the above extra assumptions on $\alpha$.

	To prove the nonpositivity of
	$$
	\Psi''' = \beta^{-1} \frac{\psi_\beta'(1)}{\psi_\alpha'(1)}\, \psi_\alpha''' - \beta^{-1} \psi_\beta'''
	$$
	on $(0,1)$, recall from \eqref{eq:psivalues2} that $\beta^{-1}\psi_\beta'(1)\geq 0$ and $\psi_\alpha'(1)>0$. Thus, it will be sufficient to show that $\psi_\alpha'$ is concave on $(0,1)$ and that $\beta^{-1} \psi_\beta'$ is convex on $(0,1)$.
	
	According to \eqref{eq:hyperder} and \eqref{eq:hyperdermod} we have
	$$
	\psi_\gamma'(\rho) = 
	\begin{cases}
		(-\frac\gamma2) \frac{2-\gamma-d}{d}\,  F (-\frac\gamma2+1,\frac{4-\gamma-d}{2};\frac d2+1;\rho) & \text{if}\ \rho<1 \,,\\
		\frac\gamma2\, \rho^{\frac{\gamma-2}{2}}    F(-\frac\gamma2+1,\frac{2-\gamma-d}{2};\frac d2;\rho^{-1}) & \text{if}\ \rho> 1\,.		
	\end{cases}
	$$
	The claimed concavity and convexity properties of $\psi_\alpha'$ and $\beta^{-1}\psi_\beta'$ now follow from Corollary \ref{hypercor}. The additional assumptions on $\alpha$ arise as follows: When considering $\gamma=\alpha$ we apply the corollary with $a=\frac{-\alpha}2+1$, $b=\frac{4-\alpha-d}{2}$, $c=\frac d2+1$. Since the prefactor $-\frac\alpha2 \frac{2-\alpha-d}{d}$ is positive, we need to verify that $a(a+1)b(b+1)\leq 0$. Since $a(a+1) = (-\frac\alpha2+1)(-\frac\alpha2+2)$ is nonpositive, we need to verify that $b(b+1) = \frac{4-\alpha-d}{2}\frac{6-\alpha-d}{2}$ is nonnegative. Recall that $\alpha$ ranges over $[2,4]$. When $d\geq 4$, then $(6-\alpha-d)(4-\alpha-d)\geq0$ for all such $\alpha$. For $d=3$, this is only true for $\alpha\geq 3$ and for $d=2$ it is only true for $\alpha=4$. This proves $\Psi'''\leq 0$ on $(0,1)$ under the stated assumptions on $\alpha$.
	
	\medskip
	
	\emph{Step 3.} We now assume that either $d=2$ and $2\leq \alpha < 4$, or $d=3$ and $2 \leq \alpha < 3$ and show that $\Psi''\geq 0$ on $(0,1)$.
	
	Together with Step 2, this will complete the proof of the lemma.
	
	By continuity (or by a small variation of the following proof), we may assume that $\alpha>2$. By repeated application of \eqref{eq:hyperder}, we have that
	\begin{align*}
		\Psi''(\rho) & = \mathcal C_1 \,   F(2-\tfrac{\alpha}{2} ,\tfrac{6-\alpha-d}{2};\tfrac{d+4}{2};\rho ) - \mathcal C_2 \,  F(2-\tfrac{\beta}{2} ,\tfrac{6-\beta-d}{2};\tfrac{d+4}{2};\rho )
	\end{align*}
	with
	\begin{align*}
		\mathcal C_1 & := \frac{(\alpha-2)(d+\alpha-4)(d+\beta-2)}{4 d(d+2)} \frac{\Gamma(d+\beta-2)\Gamma(\frac{d+\alpha}{2}) \Gamma( d + \frac{\alpha}{2}-1)}{\Gamma(d+\alpha-2)\Gamma(\frac{d+\beta}{2}) \Gamma( d + \frac{\beta}{2}-1)} \,, \\
		\mathcal C_2 & := \frac{ (2-\beta)(4-\beta-d)(d+\beta-2)}{4 d(d+2)} \,.
	\end{align*}
	
	By the assumptions on $d$, $\alpha$, and $\beta$, we know that $0<2-\frac{\alpha}{2}<2-\frac{\beta}{2}$, $0<\frac{6-\alpha-d}{2}<\frac{6-\beta-d}{2}$ and $\frac{d+4}{2}>2-\frac{\beta}{2} + \frac{6-\beta-d}{2}$. Moreover, $\mathcal C_1$ and $\mathcal C_2$ are both positive. (Note that $\mathcal C_1$ would vanish if we allowed $\alpha=2$.) Therefore we are in the situation of Lemma \ref{lem:Hypergeom intersect once} and we infer that either $\Psi''$ has no zero in $[0,1]$, or there is a unique zero $\rho_0\in [0,1]$ and, in this case, $\Psi''>0$ on $[0,\rho_0)$ and $\Psi''<0$ on $(\rho_0,1]$.
	
	By Step 1, we know that $\Psi''(1)\geq 0$. Thus, distinguishing the cases $\Psi''(1)>0$ and $\Psi''(1)=0$, we deduce that in either case we have $\Psi''>0$ on $[0,1)$. This proves the assertion made at the beginning of this step and completes the proof of the lemma.
\end{proof}

Finally, we discuss the function \eqref{eq:goalconvexapp} on the interval $[1,\infty)$.

\begin{lemma}\label{convexlarge}
	Let $d \geq 2$, $2 \leq \alpha \leq 4$ and $\beta_*(\alpha)\leq \beta<-d+4 $. Then the function \eqref{eq:goalconvexapp} is convex on $[1,\infty)$.
\end{lemma}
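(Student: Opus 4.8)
The plan is to mirror the strategy of Lemma \ref{convexsmall}, but now on the unbounded interval $[1,\infty)$, where the relevant hypergeometric functions are the ``modified'' ones $z\mapsto z^{-a}F(a,b;c;z^{-1})$ appearing in the second branch of $\psi_\gamma$. Recall from \eqref{eq:goalconvexapp} that $\Psi = \beta^{-1}(\psi_\beta'(1)/\psi_\alpha'(1))\,\psi_\alpha - \beta^{-1}\psi_\beta$, that $\beta^{-1}\psi_\beta'(1)\geq 0$ and $\psi_\alpha'(1)>0$ by \eqref{eq:psivalues2}. So it suffices to show that $\psi_\alpha$ is convex on $[1,\infty)$ and $\beta^{-1}\psi_\beta$ is concave on $[1,\infty)$. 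For the first, I would apply the second part of Corollary \ref{hypercor} with $a=-\alpha/2$, $b=(2-\alpha-d)/2$, $c=d/2$: here $a(a+1) = (-\alpha/2)(1-\alpha/2) \geq 0$ since $2\leq\alpha\leq 4$, and the hypothesis $c\geq a+2$, i.e.\ $d/2 \geq -\alpha/2+2$, i.e.\ $d+\alpha\geq 4$, holds. Hence $\rho\mapsto\rho^{-\alpha/2}F(-\alpha/2,(2-\alpha-d)/2;d/2;\rho^{-1}) = \psi_\alpha(\rho)$ is convex on $(1,\infty)$; combined with the continuous differentiability of $\psi_\alpha$ at $\rho=1$ (from Lemma \ref{hypercont}, valid since $d+\alpha>2$), and the already-established convexity on $(0,1)$ — actually we only need it from $\rho=1$ rightward — $\psi_\alpha$ is convex on $[1,\infty)$.

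The harder half is the concavity of $\beta^{-1}\psi_\beta$ on $[1,\infty)$. Applying the second part of Corollary \ref{hypercor} with $a=-\beta/2$, $b=(2-\beta-d)/2$, $c=d/2$, convexity/concavity is governed by the sign of $a(a+1)=(-\beta/2)(1-\beta/2)$, and the corollary's hypothesis $c\geq a+2$ reads $d+\beta\geq 4$, i.e.\ $\beta\geq -d+4$ — which is exactly the case \emph{excluded} in this lemma. So Corollary \ref{hypercor} does not apply directly, and this is the main obstacle: we are in the regime $-d+3<\beta<-d+4$ where the modified hypergeometric function's second derivative, expressed via \eqref{eq:hyperoneder2mod} and \eqref{eq:hyperdermod}, involves $F(-\beta/2+2, (2-\beta-d)/2; d/2; \rho^{-1})$ with parameter $c-a = d/2+\beta/2-2 < 0$, so the positivity Lemma \ref{hyperpos} is not available (its hypothesis $c\geq\max\{a,b\}$ fails).

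To handle this I expect one cannot treat $\beta^{-1}\psi_\beta$ in isolation and must instead analyze the difference $\Psi$ directly on $[1,\infty)$, as was done in Step 3 of Lemma \ref{convexsmall}. The plan: differentiate $\Psi$ twice on $(1,\infty)$ using \eqref{eq:hyperdermod} (and its iterates) to express $\Psi''(\rho)$ as $\rho^{-k}$ times a difference of two hypergeometric functions in the variable $\rho^{-1}\in(0,1)$, of the form $\mathcal D_1\, F(a_1,b_1;c;\rho^{-1}) - \mathcal D_2\, F(a_2,b_2;c;\rho^{-1})$ with a common third parameter $c$; then verify the hypotheses $q>0$, $0<a_2<a_1$, $0<b_2<b_1$, $c>a_1+b_1$ of Lemma \ref{lem:Hypergeom intersect once} — here the parameters will be shifts of $-\beta/2$ and $-\alpha/2$ made positive by the differentiations, the ordering coming from $\beta<\alpha$, and the third-parameter inequality presumably forced by $\beta>\beta_*(\alpha)>-d+3$. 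Lemma \ref{lem:Hypergeom intersect once} then gives that $\Psi''$ has at most one sign change on $[1,\infty)$, going from negative to positive (or keeping one sign). To pin down which, I would use the boundary data: $\Psi''(1)\geq 0$ from Step 1 of Lemma \ref{convexsmall}, together with the behavior as $\rho\to\infty$, where $\psi_\gamma(\rho)\sim\rho^{\gamma/2}$ so $\Psi$ is dominated by the $\alpha$-term, which is convex and growing, forcing $\Psi''>0$ near infinity. Since $\Psi''$ is positive at both ends and changes sign at most once from $-$ to $+$, it can have no sign change at all, hence $\Psi''\geq 0$ throughout $[1,\infty)$. The bookkeeping of the precise hypergeometric parameters and the verification of $c>a_1+b_1$ under the constraint $\beta\geq\beta_*(\alpha)$ is the step I expect to require the most care, together with checking that no extra restrictions on $(\alpha,\beta)$ beyond those in the lemma's hypotheses are needed.
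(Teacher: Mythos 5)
Your first half---the convexity of $\psi_\alpha$ on $[1,\infty)$ via the second part of Corollary \ref{hypercor}---is correct, and you correctly diagnose why the same corollary cannot be applied to $\beta^{-1}\psi_\beta$ when $\beta<-d+4$. The gap is in your proposed resolution via Lemma \ref{lem:Hypergeom intersect once}. On $(1,\infty)$ one has
\begin{equation*}
\Psi''(\rho) = \mathcal D_1\,\rho^{\frac\alpha2-2}\,F\bigl(2-\tfrac\alpha2,\tfrac{2-\alpha-d}2;\tfrac d2;\rho^{-1}\bigr) - \mathcal D_2\,\rho^{\frac\beta2-2}\,F\bigl(2-\tfrac\beta2,\tfrac{2-\beta-d}2;\tfrac d2;\rho^{-1}\bigr)
\end{equation*}
with $\mathcal D_1=\beta^{-1}\frac{\psi_\beta'(1)}{\psi_\alpha'(1)}\frac\alpha2(\frac\alpha2-1)\geq0$ and $\mathcal D_2=\frac12(\frac\beta2-1)<0$. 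Three features block Lemma \ref{lem:Hypergeom intersect once}: (i) the two terms carry different powers of $\rho$, so after factoring one out the other retains the nonconstant factor $\rho^{(\alpha-\beta)/2}$, and the expression is never of the form $F(a_1,b_1;c;z)-qF(a_2,b_2;c;z)$ in the single variable $z=\rho^{-1}$; (ii) the second parameters are $\frac{2-\gamma-d}{2}<0$, and differentiating the outer branch via \eqref{eq:hyperdermod} increments only the \emph{first} parameter, so the hypothesis $0<b_2<b_1$ can never be arranged, no matter how often you differentiate; (iii) since $\mathcal D_2<0$, the expression is in fact a positive combination rather than a difference with $q>0$. Note that positivity of this combination is still not immediate, because $F(2-\frac\beta2,\frac{2-\beta-d}2;\frac d2;z)$ is negative at $z=1$ in the regime $-d+3<\beta<-d+4$ (its value there is a ratio of gamma functions with exactly one negative factor).

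What the paper actually does is closer in spirit to your ``positive at both ends'' idea but uses monotonicity rather than a sign-change count: it sets $g(\rho)=\rho^{2-\beta/2}\Psi''(\rho)$ and shows $g$ is nondecreasing on $(1,\infty)$. Each hypergeometric factor $F(2-\frac\gamma2,\frac{2-\gamma-d}2;\frac d2;\rho^{-1})$ is nondecreasing in $\rho$, because its $z$-derivative is, by \eqref{eq:hyperder}, the nonpositive constant $\frac{(4-\gamma)(2-\gamma-d)}{2d}$ times a hypergeometric function that is nonnegative by Lemma \ref{hyperpos}; for $\gamma=\beta$ one checks $0<\frac{4-\beta-d}2<\frac{d+2}2$ and invokes only the integral-representation case $c>b>0$ of that lemma. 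Combined with the nonnegativity of $F(2-\frac\alpha2,\cdot;\cdot;\cdot)$, the monotonicity of $\rho^{(\alpha-\beta)/2}$, and the signs of $\mathcal D_1$ and $-\mathcal D_2$, this makes $g$ nondecreasing; since $g(1^+)=\Psi''(1)\geq0$ by Step 1 of Lemma \ref{convexsmall}, one concludes $\Psi''\geq0$ on $(1,\infty)$. You would need to replace your Lemma \ref{lem:Hypergeom intersect once} step by an argument of this kind.
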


\begin{proof}
	It suffices to show that 
	\begin{align*}
		\Psi''(\rho) & = \beta^{-1} \frac{\psi_{\beta}'(1)}{\psi_{\alpha}'(1)} \psi_{\alpha}''(\rho) -  \beta^{-1}\psi_{\beta}''(\rho)\\
		& = \beta^{-1} \frac{\psi_{\beta}'(1)}{\psi_{\alpha}'(1)} \frac{\alpha}{2} \Big( \frac{\alpha}{2} - 1 \Big) \rho^{\frac{\alpha}{2}-2}  F(2-\tfrac{\alpha}{2} ,\tfrac{2-\alpha-d}{2};\tfrac{d}{2};\rho^{-1} ) \\
		& \quad - \beta^{-1} \frac{\beta}{2} \Big( \tfrac{\beta}{2} - 1 \Big) \rho^{\frac{\beta}{2}-2}  F(2-\tfrac{\beta}{2} ,\tfrac{2-\beta-d}{2};\tfrac{d}{2};\rho^{-1} )
	\end{align*}
	is nonnegative on $(1,\infty)$. This is equivalent to showing that
	\begin{align*}
		g(\rho) & := \beta^{-1} \frac{\psi_{\beta}'(1)}{\psi_{\alpha}'(1)} \frac{\alpha}{2} \Big( \frac{\alpha}{2} - 1 \Big) \rho^{\frac{\alpha - \beta}{2}}  F(2-\tfrac{\alpha}{2} ,\tfrac{2-\alpha-d}{2};\tfrac{d}{2};\rho^{-1} )  \\
		& \quad - \beta^{-1} \frac{\beta}{2} \Big( \frac{\beta}{2} - 1 \Big)  F(2-\tfrac{\beta}{2} ,\tfrac{2-\beta-d}{2};\tfrac{d}{2};\rho^{-1} )
	\end{align*}
	is nonnegative on $(1, \infty)$.
	
	Since, by \eqref{eq:hyperder},
	$$ 
	\frac{d}{dz}  F(2 - \tfrac{\alpha}{2}, \tfrac{2 - \alpha - d}{2}; \tfrac{d}{2}; z) = \frac{(4- \alpha)(2 - \alpha - d)}{2d}\, F(3 - \tfrac{\alpha}{2}, \tfrac{4 - \alpha - d}{2}; \tfrac{d+2}{2}; z) \,,
	$$
	and $\frac{4 - \alpha -d}2 < 0 < 3 - \frac\alpha 2 \leq \frac{d+2}2$, Lemma \ref{hyperpos} tells us this derivative is nonpositive on $[0,1)$ (note that $2-\alpha-d\leq 0$ under our assumptions). Taking $z=\rho^{-1}$ we therefore have that 
	$$
	 F(2 - \tfrac{\alpha}{2}, \tfrac{2 - \alpha - d}{2}; \tfrac{d}{2}; \rho^{-1})
	$$
	is a nondecreasing function on $(1, \infty)$. Again, from Lemma \ref{hyperpos}, we see that it is also nonnegative.
	
	Likewise, since, by \eqref{eq:hyperder},
	$$
	\frac{d}{dz}  F(2 - \tfrac{\beta}{2}, \tfrac{2 - \beta - d}{2}; \tfrac{d}{2}; z) = \frac{(4- \beta)(2 - \beta - d)}{2d} \,  F(3 - \tfrac{\beta}{2}, \tfrac{4 - \beta - d}{2}; \tfrac{d+2}{2}; z) \,,
	$$
	and $0<\frac{4 - \beta -d}2 < \frac{d+2}2$ (since $-d+4>\beta\geq\beta_*(\alpha)\geq -d+3$), Lemma \ref{hyperpos} tells us this derivative is nonpositive on $[0,1)$. Here we use the fact that the first part of the proof of this lemma only requires that $c > b > 0$. Moreover, we used $2 - \beta - d < 0$. Taking $z=\rho^{-1}$, we therefore have that
	$$
	F(2 - \tfrac{\alpha}{2}, \tfrac{2 - \alpha - d}{2}; \tfrac{d}{2}; \rho^{-1})
	$$
	is a nondecreasing function on $(1, \infty)$.
	
	Since $d \geq 2$, $2 \leq \alpha \leq 4$ and $3 -d < \beta < 4-d$, a quick check shows that $\beta^{-1} \frac{\psi_{\beta}'(1)}{\psi_{\alpha}'(1)} \frac{\alpha}{2} \Big( \frac{\alpha}{2} - 1 \Big)$ and  $- \beta^{-1} \frac{\beta}{2} \Big( \frac{\beta}{2} - 1 \Big)$ are both nonnegative, and $\rho^{\frac{\alpha - \beta}{2}}$ is a positive and increasing function on $(1, \infty)$. Thus $g$ is an increasing function on $(1, \infty)$, and we have our claim.
\end{proof}

We are finally in position to complete the proof of the main result of this section.

\begin{proof}[Proof of Proposition \ref{psiconvex} for $\beta<-d+4$]
	It follows from Lemmas \ref{convexsmall} and \ref{convexlarge} that $\Psi$ is convex on each one of the two intervals $[0,1]$ and $[1,\infty)$. Since $\Psi$ is continuously differentiable at the point $\rho=1$, we deduce that it is convex on all of $(0,\infty)$, as claimed.
\end{proof}


\section{Logarithmic case}\label{Sec:Log Case}

In this final section, we explain how the above proofs of Theorems \ref{main1} and \ref{main} can be extended to the case $\beta=0$, where the expression $\beta^{-1}|x-y|^\beta$ in the energy functional is interpreted as $\ln|x-y|$. The basic idea is that the conditions in Lemma \ref{riesz}, which have been verified for $\beta\neq 0$, extend by continuity to the case $\beta=0$.

\begin{proof}[Proof of Theorem \ref{main1} for $\beta=0$]
	We recall the definition of $\psi_\gamma$ from \eqref{eq:defpsigamma}. From the definition of the hypergeometric series it follows that $\psi_0\equiv 1$, and it is easy to see that $\tilde\psi_0 := \lim_{\gamma\to 0} \gamma^{-1}(\psi_\gamma -1)$ exists. In particular, for any $R>0$, we have
	\begin{equation}
		\label{eq:limmain1}
		\lim_{\beta\to 0} \frac{R^\beta}{\beta}\psi_\beta\Big(\Big|\frac{x}{R} \Big|^2\Big) = \ln R + \tilde\psi_0 \Big(\Big|\frac{x}{R} \Big|^2 \Big) \,.
	\end{equation}
	Thus, it follows from \eqref{eq:totpotmain} that the total potential of $(|\Sph^{d-1}|R^{d-1})^{-1} \delta_{\partial B_R(0)}$ is equal to
	$$
	\frac{1}{|\Sph^{d-1}|}\int_{\Sph^{d-1}} \left( \frac{|x-R\omega|^\alpha}{\alpha} - \ln |x-R\omega| \right) d\omega = \frac{ R^{\alpha}}{\alpha} \psi_\alpha \Big(\Big|\frac{x}{R} \Big|^2\Big) - \ln R - \tilde\psi_0 \Big(\Big|\frac{x}{R} \Big|^2 \Big) \,.
	$$
	The function $\tilde\psi_0$ is differentiable, as can be seen by studying the convergence behavior of the corresponding power series at the endpoint of the radius of concergence; see also Remark \ref{limrem} below. Thus, setting the radial derivative of the total potential equal to zero, we arrive at the condition
	$$
	\frac{ R^{\alpha}}{\alpha} \psi_\alpha'(1) - \tilde\psi_0'(1) = 0 \,,
	$$
	which is satisfied precisely for $R=R_{\alpha,0}$ given by \eqref{eq:defr1}. Here we use the value of $\tilde\psi_0'(1)$, which can be read off from \eqref{eq:psivalues2}.
	
	The inequality that needs to be shown is that, for all $x\in\R^d$,
	$$
	\frac{ R_{\alpha,0}^{\alpha}}{\alpha} \psi_\alpha \Big(\Big|\frac{x}{R_{\alpha,0}} \Big|^2\Big)
	- \ln R_{\alpha,0} + \tilde\psi_0 \Big(\Big|\frac{x}{R_{\alpha,0}} \Big|^2 \Big)
	\geq
	\frac{ R_{\alpha,0}^{\alpha}}{\alpha} \psi_\alpha (1)
	- \ln R_{\alpha,0} + \tilde\psi_0 (1) \,.
	$$
	By \eqref{eq:limmain1} and $\lim_{\beta\to 0} R_{\alpha,\beta}=R_{\alpha,0}$, this inequality is a consequence of inequality \eqref{eq:ineqmain1}.
\end{proof}

\begin{remark}\label{limrem}
	The function $\tilde\psi_0$ can be expressed as a generalized hypergeometric series $\Hypergeom32{\cdot,\cdot,\cdot}{\cdot,\cdot}{\cdot}$, whose definition can be found for instance in \cite[Section 9.14]{GrRy}. This expression is not important for us, but may be useful in other contexts and we give it here. Indeed, by introducing polar coordinates, we find
	\begin{align*}
		& \int_{\Sph^{d-1}} \ln |x-\omega| \,d\omega = \frac12 |\Sph^{d-2}| \int_0^\pi \ln\left( |x|^2 - 2 |x|\cos\theta + 1 \right) \sin^{d-2}\theta\,d\theta \\
		& \quad = \frac12 |\Sph^{d-2}| \int_{-1}^1 \ln\left( |x|^2 - 2 |x| t + 1 \right) (1-t^2)^\frac{d-3}{2} \,dt \\
		& \quad = 2^{d-3} |\Sph^{d-2}|  \int_0^1 \left( 2 \ln(1 + |x|) + \ln(1- \tfrac{4 |x|}{(1+|x|)^2} u) \right) (1-u)^{\frac{d-3}{2}} u^\frac{d-3}{2}\,du \\
		& \quad = 2^{d-3} |\Sph^{d-2}| \Bigg( 2^{3-d} \frac{ \sqrt{\pi}\Gamma( \frac{d-1}{2})}{\Gamma(\frac{d}{2})} \ln(1+|x|) \\
		& \quad \qquad \qquad - \frac{4 |x|}{(1+|x|)^2} \frac{\Gamma( \frac{d-1}{2}) \Gamma(\frac{d+1}{2})}{\Gamma(d)} \Hypergeom32%
		{ 1, 1,  \frac{d+1}{2}}%
		{2, d}{\frac{4 |x|}{(1+ |x|)^2}} \Bigg)\\
		& \quad = |\Sph^{d-1}| \Bigg(  \ln(1+|x|) - \frac{|x|}{(1+|x|)^2} \Hypergeom32%
		{ 1, 1,  \frac{d+1}{2}}%
		{2, d}{\frac{4 |x|}{(1+ |x|)^2}} \Bigg) \,.
	\end{align*}
	Here we have successively changed variables $\cos\theta=\omega\cdot x/|x|$, $t=\cos\theta$ and $u= (1+t)/2$. For the evaluation of the integral we applied the following integral formula (see e.g.\ \cite[eq. (4.1.2)]{Sl}): For $b_0 > a_0 >0$ and $z \in [0,1)$,
\begin{equation} \label{eq:EIF32}
  \Hypergeom32%
		{ a_0, a_1,  a_2}%
		{b_0, b_1}{z}  =\frac{\Gamma(b_0)}{\Gamma(a_0)\Gamma(b_0-a_0)}
  \int_0^1 u^{a_0-1}(1-u)^{b_0-a_0-1}
  F(a_1,a_2;b_1;zu) \,d u \,.
\end{equation}
Setting $a_1 = a_2 = 1$ and $b_1=2$
and using that $F(1,1; 2; z)=-\frac{\log(1-z)}{z}$, we then have 
\begin{equation} \label{eq:EIF32log}
  		 \Hypergeom32%
		{ a_0, 1,  1}%
		{b_0, 2}{z}
  =-\frac{\Gamma(b_0)}{z\Gamma(a_0)\Gamma(b_0-a_0)}
  \int_0^1 u^{a_0-2}(1-u)^{b_0-a_0-1}
  \log(1-zu) \, d u \,.
\end{equation} 
In our setting, $z = \frac{4 |x|}{(1 + |x|)^2}$. Thus we see that, in the notation of the previous proof,
	$$
	\tilde\psi_0(\rho) = \ln(1+\sqrt{\rho})  - \frac{ \sqrt{\rho}}{(1+\sqrt{\rho})^2}  \Hypergeom32%
	{ 1, 1,  \frac{d+1}{2}}%
	{2, d}{\frac{4 \sqrt{\rho}}{(1+ \sqrt{\rho})^2}} \,.
	$$
	Moreover, using the formula of the derivative of a generalized hypergeometric function we find, after a tedious, but straightforward computation,
	$$
	\tilde\psi_0'(\rho) = \frac12 \left( \frac{1}{ \sqrt{\rho}(1+ \sqrt{\rho})}  + \frac{ \sqrt{\rho}-1}{\sqrt{\rho} (1+ \sqrt{\rho})^3} F \left (1  ,   \tfrac{d+1}{2}; d;  \tfrac{4 \sqrt{\rho}}{(1+\sqrt{\rho})^2} \right) \right).
	$$
	Using a transformation formula for the hypergeometric series \cite[(9.134.2)]{GrRy}, we can write this as
	$$
	\tilde\psi_0'(\rho)
	= \begin{cases}
		\tfrac{d-2}{2d}\, F(1,\tfrac{4-d}2;\tfrac d2+1;\rho) & \text{if}\ \rho<1 \,,\\
		\tfrac12 \, \rho^{-1} F(1,\tfrac{2-d}{2};\tfrac d2;\rho^{-1}) & \text{if}\ \rho>1 \,.
	\end{cases}
	$$
	As in the proof of Lemma \ref{hypercont} we can deduce from the latter formula the continuity of $\tilde\psi_0'$ at $\rho=1$, which was used in the previous proof.
\end{remark}

\begin{proof}[Proof of Theorem \ref{main}]
	Let us set
	$$
	\tilde\Phi(x) := - \int_{|y|<1} \ln |x-y| \, (1-|y|^2)^\frac{2-d}{2}\,dy \,,
	$$
	so that, by scaling, the total potential of the measure $\nu$ with
	$$d\nu(y) = (R^2-|y|^2)^\frac{2-d}{2} \mathbbm{1}_{B_R(0)}(y) dy$$	
	is
	$$
	R^2 \left( C_0 \ln R + \tilde\Phi \Big(\frac xR\Big) \right) + \frac{C_0}{2} R^4 \left( \Big| \frac xR \Big|^2 + \frac{d}{4} \right).
	$$
	Here we recall that the constant $C_\beta$ is defined in \eqref{eq:defca} and we use the expression for it in \eqref{eq:defcacomp} with $\beta=0$. The potential of the quadratic ($\alpha = 2$) part of the kernel was computed similarly to the case $\beta\neq 0$.
	
	We will denote the potential $\Phi$ in \eqref{eq:defphimain} by $\Phi_\beta$, making the $\beta$-dependence explicit. A short computation shows that for each $x\in\R^d$,
	\begin{equation}
		\label{eq:limitmain}
		\lim_{\beta\to 0} \left( \Phi_\beta(x) + \beta^{-1}C_\beta \right) = \tilde\Phi(x) - \tilde C_0 \,.
	\end{equation}
	Here we have used again \eqref{eq:defcacomp} and we have set $\tilde C_0 := \frac{d}{d\beta}|_{\beta=0} C_\beta$.
	
	As we have shown above in the proof of Theorem \ref{main} for $\beta\neq 0$, the function
	$$
	x \mapsto R_{2,\beta}^2 \Phi_\beta \Big( \frac x {R_{2,\beta}} \Big) + \frac{C_\beta}2 R_{2,\beta}^{4-\beta} \left( \Big| \frac x{R_{2,\beta}} \Big|^2 + \frac{d}{4-\beta} \right)
	$$
	is constant on $\{x \in \mathbb{R}^d : |x|\leq R_{2,\beta}\}$ and not smaller than this constant in $\{x \in \mathbb{R}^d: |x|> R_{2,\beta}\}$. Clearly, these properties are preserved if we add the constant $R_{2,\beta}^2 \beta^{-1}C_\beta$. Since $\beta\mapsto R_{2,\beta}$ is continuous, these properties are also preserved in the limit $\beta\to 0$. Thus, according to \eqref{eq:limitmain}, we have shown that the function
	$$
	x \mapsto R_{2,0}^2 \left( \tilde\Phi \Big( \frac x {R_{2,0}} \Big) - \tilde C_0 \right) + \frac{C_0}2 R_{2,0}^{4} \left( \Big| \frac x{R_{2,0}} \Big|^2 + \frac{d}{4} \right)
	$$
	is constant on $\{x \in \mathbb{R}^d : |x|\leq R_{2,0}\}$ and not smaller than this constant in $\{x \in \mathbb{R}^d : |x|> R_{2,0}\}$. Clearly, this property is preserved if we add the constant $R_{2,0}^2 ( C_0 \ln R_{2,0} + \tilde C_0)$. This shows that the conditions in Lemma \ref{riesz} are satisfied and we conclude as in the case $\beta\neq 0$.
\end{proof}


\bibliographystyle{amsalpha}

\begin{thebibliography}{31}

\bibitem{AbSt} M. Abramowitz, I. A. Stegun, \textit{Handbook of mathematical functions with formulas, graphs, and mathematical tables}. National Bureau of Standards Applied Mathematics Series, No. 55 U. S. Government Printing Office, Washington, D.C., 1964

\bibitem{BaCaLaRa} D. Balagu\'e, J. A. Carrillo, T. Laurent, G. Raoul, \textit{Nonlocal interactions by repulsive-attractive potentials: radial ins/stability}. Phys. D \textbf{260} (2013), 5--25.

\bibitem{BaCaLaRa1} D. Balagu\'e, J. A. Carrillo, T. Laurent, G. Raoul, \textit{Dimensionality of local minimizers of the interaction energy}. Arch. Ration. Mech. Anal. \textbf{209} (2013), no. 3, 1055--1088.

\bibitem{Bi} M. Bilogliadov, \textit{Minimum Riesz energy problem on the hyperdisk}, J. Math. Phys. \textbf{59} (2018), 013301.

\bibitem{BrChHC} A. Burchard, R. Choksi, E. Hess-Childs, \textit{On the strong attraction limit for a class of nonlocal interaction energies}. Nonlinear Analysis \textbf{198} (2020), 111844.

\bibitem{CaVa} L. A. Caffarelli, J. L. V\'azquez, \textit{Asymptotic behaviour of a porous medium equation with fractional diffusion}. Discrete Contin. Dyn. Syst. \textbf{29} (2011), no. 4, 1393--1404.

\bibitem{CaDeDoFrHo} J. A. Carrillo, M. G. Delgadino, J. Dolbeault, R. L. Frank, F. Hoffmann, \textit{Reverse Hardy--Littlewood--Sobolev inequalities}. J. Math. Pures Appl. (9) \textbf{132} (2019), 133--165.

\bibitem{CaDeMe} J. A. Carrillo, M. G. Delgadino, A. Mellet, \textit{Regularity of local minimizers of the interaction energy via obstacle problems}. Comm. Math. Phys. \textbf{343} (2016), no. 3, 747--781.

\bibitem{CaFiPa} J. A. Carrillo, A. Figalli, F. S. Patacchini, \textit{Geometry of minimizers for the interaction energy with mildly repulsive potentials}. Ann. Inst. H. Poincar\'e C Anal. Non Lin\'eaire \textbf{34} (2017), no. 5, 1299--1308. 

\bibitem{CaHu} J. A. Carrillo, Y. Huang, \textit{Explicit equilibrium solutions for the aggregation equation with power-law potentials}. Kinet. Relat. Models \textbf{10} (2017), no. 1, 171--192.

\bibitem{CaMaMoRoScVe} J. A. Carrillo, J. Mateu, M. G. Mora, L. Rondi, L. Scardia, J. Verdera, \textit{The ellipse law: Kirchhoff meets dislocations}. Comm. Math. Phys. \textbf{373} (2020), no. 2, 507--524.

\bibitem{CaMaMoRoScVe2} J. A. Carrillo, J. Mateu, M. G. Mora, L. Rondi, L. Scardia, J. Verdera, \textit{The equilibrium measure for an anisotropic nonlocal energy}. Calc. Var. Partial Differential Equations \textbf{60} (2021), no. 3, Paper No. 109, 28 pp.

\bibitem{CaSh1} J. A. Carrillo, R. Shu, \textit{From radial symmetry to fractal behavior of aggregation equilibria for repulsive-attractive potentials}.  Calc. Var. Partial Differential Equations \textbf{62} (2023), no. 1, Paper No. 28, 61 pp.

\bibitem{CaSh2} J. A. Carrillo, R. Shu, \textit{Global minimizers of a large class of anisotropic attractive-repulsive interaction energies in 2D}. Preprint (2022), arXiv:2202.09237.

\bibitem{CaSh3} J. A. Carrillo, R. Shu, \textit{Minimizers of 3D anisotropic interaction energies}. Communications on Pure and Applied Mathematics (2023), DOI: \href{http://dx.doi.org/10.1002/cpa.22162}{10.1002/cpa.22162}

\bibitem{ChSaWo1} D. Chafa\"{i}, E. B. Saff, R. S. Womersley, \emph{On the solution of a Riesz equilibrium problem and integral identities for special functions.} J. Math. Anal. Appl. \textbf{515} (2022), 126367.

\bibitem{ChSaWo2} D. Chafa\"{i}, E. B. Saff, R. S. Womersley, \emph{Threshold condensation to singular support for a Riesz equilibrium problem.} Analysis and Mathematical Physics. \textbf{13}, no. 1, (2023) DOI: \href{http://dx.doi.org/10.1007/s13324-023-00779-w}{10.1007/s13324-023-00779-w}

\bibitem{DaLiMC1} C. Davies, T. Lim, R. McCann, \textit{Classifying minimum energy states for interacting particles: spherical shells}. SIAM J. Appl. Math. \textbf{82} (2022), no. 4, 1520--1536.

\bibitem{DaLiMC2} C. Davies, T. Lim, R. McCann, \textit{Classifying minimum energy states for interacting particles: regular simplices}. Comm. Math. Phys. \textbf{399} (2023), no. 2, 577--598.

\bibitem{Fr0} R. L. Frank, \textit{Some minimization problems for mean field models with competing forces}. EMS Press, Berlin (2023), pp.\ 277--294.

\bibitem{Fr1} R. L. Frank, \textit{Minimizers for a one-dimensional interaction energy}. Nonlinear Analysis \textbf{216} (2022), 112691.

\bibitem{GrRy} I.~S.~Gradshteyn, I.~M.~Ryzhik, \textit{Table of integrals, series, and products}. Eighth edition. Elsevier/Academic Press, Amsterdam, 2015.

\bibitem{GuCaOl} T. S. Gutleb, J. A. Carrillo, S. Olver, \textit{Computing equilibrium measures with power law kernels}. Mathematics of Computation \textbf{91} (2022), 2247--2281.

\bibitem{HeGrBeSt} J. Hertrich, M. Gr\"{a}f, R. Beinert, G. Steidl, \emph{Wasserstein steepest descent flows of discrepancies with Riesz kernels}. Preprint (2022), arXiv:\href{https://arxiv.org/abs/2211.01804}{2211.01804}.

\bibitem{LiMC} T. Lim, R. J. McCann, \emph{Isodiametry, variance, and regular simplices from particle interactions}. Arch. Ration. Mech. Anal.  \textbf{241} (2021), 553--576.

\bibitem{Lo} O. Lopes, \textit{Uniqueness and radial symmetry of minimizers for a nonlocal variational problem}. Commun. Pure Appl. Anal. \textbf{18} (2019), no. 5, 2265--2282.

\bibitem{Lop} A. Lopez-Garcia, \textit{Greedy energy points with external fields. In Recent trends in orthogonal polynomials and approximation theory}. In: Contemp. Math. \textbf{507}, pp.\ 189--207. Amer. Math. Soc., Providence, RI, 2010.

\bibitem{MaMoRoScVe} J. Mateu, M. G. Mora, L. Rondi, L. Scardia, J. Verdera, \textit{Explicit minimisers for anisotropic Coulomb energies in 3D}. Adv. Math. \textbf{434} (2023), Paper No. 109333, 28 pp.

\bibitem{MhSa} H.N. Mhaskar,  E.B. Saff, \textit{Weighted analogues of capacity, transfinite diameter, and Chebyshev constant}, Constructive Approximation \textbf{8} (1992), 105-124.

\bibitem{MoRoSc} M. G. Mora, L. Rondi, L. Scardia, \textit{The equilibrium measure for a nonlocal dislocation energy}. Comm. Pure Appl. Math. \textbf{72} (2019), no. 1, 136--158.

\bibitem{Sl} L. J. Slater, \textit{Generalized Hypergeometric Functions}. Cambridge University Press, Cambridge, 1966.
\end{thebibliography}

\end{document}